\documentclass[12pt]{amsart}
\usepackage{amssymb,amsfonts,amsthm,latexsym,bm}
\usepackage{hyperref}
\usepackage[centertags]{amsmath}
\usepackage{mathrsfs}
\usepackage{t1enc}
\usepackage{graphicx}
\usepackage{enumerate}

\advance\headheight by 3pt

\newtheorem{theorem}{Theorem}[section]
\newtheorem{corollary}[theorem]{Corollary}
\newtheorem{lemma}[theorem]{Lemma}

\newtheorem*{theorem*}{Theorem}

\numberwithin{equation}{section}

\def\pitem{\advance\leftskip3mm\advance\linewidth-3mm}
\def\mitem{\advance\leftskip-3mm\advance\linewidth3mm}

\begingroup\makeatletter\ifx\SetFigFont\undefined
\gdef\SetFigFont#1#2#3#4#5{
  \reset@font\fontsize{#1}{#2pt}
  \fontfamily{#3}\fontseries{#4}\fontshape{#5}
  \selectfont}
\fi\endgroup

\theoremstyle{definition}

\renewcommand{\subjclass}[1]{\thanks{\emph{2020 Mathematics Subject Classification:}~#1}}
\renewcommand{\keywords}[1]{\thanks{\emph{Keywords and Phrases:}~#1}}
\renewcommand{\date}{\thanks{\today}}

\newcommand{\av}{\mathbf{a}}
\newcommand{\bv}{\mathbf{b}}
\newcommand{\cv}{\mathbf{c}}
\newcommand{\dv}{\mathbf{d}}
\newcommand{\ev}{\mathbf{e}}

\newcommand{\uv}{\mathbf{u}}

\newcommand{\xv}{\mathbf{x}}
\newcommand{\yv}{\mathbf{y}}
\newcommand{\zv}{\mathbf{z}}

\renewcommand{\AA}{\mathcal{A}}
\newcommand{\BB}{\mathcal{B}}
\newcommand{\CC}{\mathcal{C}}

\newcommand{\EE}{\mathcal{E}}

\newcommand{\HH}{\mathcal{H}}

\newcommand{\KK}{\mathcal{K}}
\newcommand{\LL}{\mathcal{L}}

\newcommand{\NN}{\mathcal{N}}

\newcommand{\TT}{\mathcal{T}}
\newcommand{\UU}{\mathcal{U}}
\newcommand{\VV}{\mathcal{V}}

\newcommand{\fp}{\mathfrak{p}}

\newcommand{\Cc}{\mathbb{C}}
\newcommand{\Rr}{\mathbb{R}}
\newcommand{\Qq}{\mathbb{Q}}

\newcommand{\Zz}{\mathbb{Z}}

\newcommand{\ordfp}{{\rm ord}_{\mathfrak{p}}}

\def\house#1{\setbox1=\hbox{$\,#1\,$}%
\dimen1=\ht1 \advance\dimen1 by 2pt \dimen2=\dp1 \advance\dimen2
by 2pt
\setbox1=\hbox{\vrule height\dimen1 depth\dimen2\box1\vrule}%
\setbox1=\vbox{\hrule\box1}%
\advance\dimen1 by .4pt \ht1=\dimen1 \advance\dimen2 by .4pt
\dp1=\dimen2 \box1\relax}

\newcommand{\kdots}{,\ldots ,}

\newcommand{\half}{\mbox{$\textstyle{\frac{1}{2}}$}}

\newcommand{\medfrac}[2]{\mbox{\large{$\textstyle{\frac{#1}{#2}}$}}}

\newcommand{\rank}{{\rm rank}\,}

\newcommand{\Z}{\mathbb{Z}}

\title[Sums of elements from a multiplicative group]{Asymptotic formulas for sums of elements from a multiplicative group}

\author[J.-H. Evertse]{Jan-Hendrik Evertse}

\author[K. Gy\H{o}ry]{K\'alm\'an Gy\H{o}ry}

\author[L. Hajdu]{Lajos Hajdu}

\author[F. Luca]{Florian Luca}

\author[L. Remete]{L\'aszl\'o Remete}

\thanks{Research supported in part by the HUN-REN Hungarian Research Network and by the NKFIH grants 128088 and 130909.}

\subjclass{11D09, 11D61, 11B37}

\keywords{multiplicative group, height, representations by finite sums, $S$-unit equations}

\address{Jan-Hendrik Evertse
\hfill\break\indent Leiden University, Mathematical Institute
\hfill\break\indent 2300 RA Leiden, P.O. Box 9512.
\hfill\break\indent The Netherlands}

\email{evertse@math.leidenuniv.nl}

\address{K\'alm\'an Gy\H{o}ry
\hfill\break\indent University of Debrecen, Institute of Mathematics
\hfill\break\indent H-4002 Debrecen, P.O. Box 400.
\hfill\break\indent Hungary}

\email{gyory@science.unideb.hu}

\address{Lajos Hajdu
\hfill\break\indent University of Debrecen, Institute of Mathematics
\hfill\break\indent H-4002 Debrecen, P.O. Box 400.
\hfill\break\indent and HUN-REN DE Equations, Functions, Curves and their Applications Research Group
\hfill\break\indent Hungary}

\email{hajdul@science.unideb.hu}

\address{Florian Luca
\hfill\break\indent Stellenbosch University
\hfill\break\indent Mathematics/Industrial Psychology Building 
\hfill\break\indent
Merriman Street, 7600 Stellenbosch, South Africa}

\email{fluca@sun.ac.za}

\address{L\'aszl\'o Remete
\hfill\break\indent University of Debrecen, Institute of Mathematics
\hfill\break\indent H-4002 Debrecen, P.O. Box 400.
\hfill\break\indent and HUN-REN DE Equations, Functions, Curves and their Applications Research Group
\hfill\break\indent Hungary}

\email{laszlo.remete@science.unideb.hu}

\begin{document}

\begin{abstract}
Let $K$ be a number field, $k\geq 2$ an integer, $(K^*)^k$ the $k$-fold direct product of $K^*$ with coordinatewise multiplication, and $\Gamma$ a finitely generated subgroup of rank $r$ of $(K^*)^k$. Further, let $H(\alpha )$ denote the absolute exponential height of an algebraic number $\alpha$. Fix non-zero elements $a_1\kdots a_k\in K$. 
We give asymptotic formulas for the number of $\mathbf{x}=(x_1\kdots x_k)\in\Gamma$ with $H(a_1x_1+\cdots +a_kx_k)\leq X$ as $X\to\infty$ such that no non-empty subsum of $a_1x_1+\cdots +a_kx_k$ vanishes. By the same method of proof, we obtain an asymptotic formula as $X\to\infty$ for the number of non-negative integers $n$ with $H(u_n)\leq X$, where $\{ u_n\}$ is a linear recurrence sequence.
\end{abstract}

\maketitle

\section{Introduction}\label{sec1} 
Let $K$ be a number field, $k\geq 2$ an integer, and $S$ a finite set of places of $K$, containing all infinite places. Suppose $S$ has cardinality $s+1$. Denote by $U_S$ the group of $S$-units of $K$; then $U_S$ has rank $s$. Fix non-zero $a_1\kdots a_k\in K$.
Denote by $H(\alpha )$ the absolute exponential Weil height of an algebraic number $\alpha$.

This paper builds further on work of G. Everest. 
By a slight modification of the arguments in his papers \cite{everest1,everest}, one can deduce an asymptotic formula for the number of tuples $(x_1\kdots x_k)$ with $x_1\kdots x_k\in U_S$ that satisfy
\[
H(a_1x_1+\cdots +a_kx_k)\leq X
\]
and are non-degenerate in the sense that no non-trivial subsum of $\sum_{i=1}^k a_ix_i$ vanishes.
In fact, Everest's arguments yield for this number an asymptotic formula
\begin{equation}\label{eq:1.1}
c(U_S,k)(\log X)^{ks}+O((\log X)^{ks-1})\ \ \text{as } X\to\infty,
\end{equation}
where $c(U_S,k)>0$. Everest's main tools are a result from \cite{evertse} that compares $H(a_1x_1+\cdots +a_kx_k)$ with the height of the vector $(x_1\kdots x_k)$ (which in turn was deduced from Schmidt's and Schlickewei's $p$-adic Subspace theorem \cite{schmidt, schlickewei}), and  logarithmic forms estimates. 

Let $(K^*)^k$ denote the $k$-fold direct product of $K^*$ with coordinatewise multiplication and inversion.
We consider more generally sums 
\[
a_1x_1+\cdots +a_kx_k\ \ \text{with $(x_1\kdots x_k)\in\Gamma$,}
\]
where $\Gamma$ is a finitely generated subgroup of rank $r$ of $(K^*)^k$.
We assume that for each distinct $i,j\in\{ 1\kdots k\}$, there is 
$(x_1\kdots x_k)\in\Gamma$ such that $x_i/x_j$ is not a root of unity.
The purpose of this paper is to deduce an asymptotic formula for the number of non-degenerate $(x_1\kdots x_k)\in\Gamma$ with $H(a_1x_1+\cdots +a_kx_k)\leq X$. 
Our first result (see Theorem \ref{thm:2.1} below) is, that this number equals 
\begin{equation}\label{eq:1.2}
c(\Gamma )(\log X)^r\cdot\left(1+O\Big(\frac{(\log\log X)^2}{\log X}\Big)^{1/(r+4)}\right)\ \ \text{as } X\to\infty, 
\end{equation}
with $c(\Gamma )>0$ depending only on $\Gamma$. In this general set-up, we could not make Everest's method work, so we used instead  
a different method, based on a suitable quantitative version of the Parametric Subspace Theorem \cite{evfer}. 

By a similar argument, we obtain an asymptotic formula for non-degenerate linear recurrence sequences
$u_n=\sum_{i=1}^k a_i(n)\alpha_i^n$, i.e., the $a_i$ are non-zero polynomials in $K[X]$
and the $\alpha_i$ non-zero elements of $K$, such that for any two distinct $i,j$,
$\alpha_i/\alpha_j$ is not a root of unity. In fact, we show that 
the number of non-negative integers $n$ with $H(u_n)\leq X$ equals
\begin{equation}\label{eq:1.2a}
c\log X\left(1+O\Big(\frac{(\log\log X)^2}{\log X}\Big)^{1/4}\right)\ \ \text{as } X\to\infty , 
\end{equation}
where $c>0$, see Theorem \ref{thm:2.2} below.

We can improve on \eqref{eq:1.2} 
in the special case 
\[
\Gamma =\Gamma_1^k =\{(x_1\kdots x_k):\, x_1\kdots x_k\in\Gamma_1\},
\]
where $\Gamma_1$ is a finitely generated subgroup of $K^*$. Indeed, in this situation, Everest's method is applicable, and we obtain instead of \eqref{eq:1.2},
\begin{equation}\label{eq:1.3}
c(\Gamma )(\log X)^r +O((\log X)^{r-1})\ \ \text{as } X\to\infty ,
\end{equation}
see Theorem \ref{thm:2.3} below.
In fact, our method of proof is a simplification of Everest's, which avoids the logarithmic forms estimates. In the case $\Gamma_1=U_S$ we obtain \eqref{eq:1.1}.

Finally, we deduce an asymptotic formula where we do not count tuples
$(x_1\kdots x_k)$, but instead algebraic numbers $\alpha$ with $H(\alpha )\leq X$ that are representable in the form  $a_1x_1+\cdots +a_kx_k$ with $x_1\kdots x_k\in\Gamma_1$, see Theorem \ref{thm:2.4} below. This extends work of \'Ad\'am, Hajdu, and Luca \cite{ahl}, and of Frei, Tichy, and Ziegler \cite{ftz}.

In all our theorems, the constants implied by the $O$-symbols in the asymptotic formulas cannot be effectively determined by our method of proof,
except for the one in Theorem \ref{thm:2.2} on linear recurrence sequences, where the implied constant is effectively computable.

In the next section, we introduce the necessary notation and state the above results in a precise form. In Section \ref{sec3} we have collected our tools, and in the subsequent sections, we prove our theorems. 

\section{Notation and results}\label{sec2}

We denote by $|\AA |$ the cardinality of a set $\AA $.
Throughout this paper, $K$ will be an algebraic number field of degree $d$ and $k\geq 2$ an integer. Let $K^*$ denote the group $K\setminus\{ 0\}$ with multiplication, and $(K^*)^k$ its $k$-fold direct product. Denote by $O_K$ the ring of integers of $K$.
The real infinite places of $K$ are its real embeddings $\sigma_i: K\hookrightarrow \Rr$
($i=1\kdots r_1$), the complex infinite places of $K$ are its pairs of conjugate complex embeddings 
\[
\{ \sigma_i ,\overline{\sigma_i} :\, K\hookrightarrow\Cc\}\ \ (i=r_1+1\kdots r_1+r_2),
\]
and the finite places of $K$ are the prime ideals of $O_K$. The set of places $M_K$ of $K$ consists of the (real and complex) infinite places and the finite places of $K$. We define normalized absolute values $\|\cdot \|_v $ $(v\in M_K)$ on $K$ by
\begin{align*}
&\|\cdot \|_v:=|\sigma (\cdot )|^{1/d}\ &\text{if $v=\sigma$ is real infinite,}
\\
&\|\cdot \|_v:=|\sigma (\cdot )|^{2/d}\ &\text{if $v=\{ \sigma ,\overline{\sigma}\}$ is complex infinite,}
\\
&\|\cdot \|_v:=N\fp^{-\ordfp (\cdot )/d}\ &\text{if $v=\fp$ is finite,}
\end{align*}
where $N\fp =|O_K/\fp |$ is the norm of $\fp$, and $\ordfp (\alpha )$ is the exponent of $\fp$
in the unique prime ideal factorization of $\alpha\in K$, with $\ordfp (0)=\infty$.
These absolute values satisfy the product formula
\[
\prod_{v\in M_K} \|\alpha \|_v=1\ \ \text{for } \alpha\in K^*.
\]

The absolute, exponential  height and logarithmic height of $\alpha\in K$ are given by
$$
H(\alpha) :=\prod\limits_{v\in M_K} \max(1,\|\alpha\|_v),\ \ h(\alpha ):=\log H(\alpha ).
$$
These heights depend only on $\alpha$, i.e., are independent of the choice of a number field $K$ containing $\alpha$.
For later use, note that (as is well-known), we have
$$
H(\alpha_1\cdots\alpha_n)\leq H(\alpha_1)\cdots H(\alpha_n)
$$
and
$$
H(\alpha_1+\cdots+\alpha_n)\leq nH(\alpha_1)\cdots H(\alpha_n)
$$
for any $\alpha_1,\dots,\alpha_n\in K$. Further, for any non-zero $\alpha\in K$ and $m\in\Z$ we have
$$
H(\alpha^m)=H(\alpha)^{|m|}.
$$
More generally, for a vector $\mathbf{x}=(x_1\kdots x_k)\in K^k$ we define
\[
\|\mathbf{x}\|_v:=\max (\|x_1\|_v\kdots \|x_k\|_v)\ \ \text{for } v\in M_K
\] 
and subsequently its exponential and logarithmic height
\[
H(\mathbf{x})=H(x_1\kdots x_k):=\prod_{v\in M_K}\max (1,\|\mathbf{x}\|_v),\ \ h(\xv ):=\log H(\xv ).
\]
The product of $\mathbf{x}=(x_1\kdots x_k),\, \mathbf{y}=(y_1\kdots y_k)\in K^k$ is defined coordinatewise, i.e., $\mathbf{x}\cdot\mathbf{y}=(x_1y_1\kdots x_ky_k)$. 
Further, if $\mathbf{x}\in (K^*)^k$, i.e., $x_1\cdots x_k\not= 0$ we define $\mathbf{x}^{-1}:=(x_1^{-1}\kdots x_k^{-1})$.  
One easily shows that
\begin{align}\label{eq:2.1}
&H(\mathbf{x}\cdot \mathbf{y})\leq H(\mathbf{x})\cdot H(\mathbf{y})\ \ \text{for }
\mathbf{x},\mathbf{y}\in K^k,
\\
\label{eq:2.2}
&H(\mathbf{x}^m)=H(\mathbf{x})^m\ \ \text{for } \mathbf{x}\in K^k,\, m\in\Zz_{\geq 0}.
\end{align}
Further,
\begin{equation}\label{eq:2.3}
H(\mathbf{x}^{-1})\leq H(\mathbf{x})^{k}\ \ \text{for } \mathbf{x}\in (K^*)^k.
\end{equation}
Indeed, if $\mathbf{x}=(x_1\kdots x_k)$ one may write for $v\in M_K$,
\begin{align*}
&\max (1,\|\mathbf{x}^{-1}\|_v)=\max (1,\| x_1\|_v^{-1}\kdots \| x_k\|_v^{-1}) 
\\
&\quad =\|x_1\cdots x_k\|_v^{-1}\max \Big(\|x_1\cdots x_k\|_v,\|\prod_{j\not= 1}x_j\|_v\kdots \|\prod_{j\not= k}x_j\|_v\Big)
\\
&\quad \leq \|x_1\cdots x_k\|_v^{-1}\max (1,\|\xv\|_v)^k
\end{align*}
and by taking the product over all $v$ and applying the product formula, one obtains \eqref{eq:2.3}.

Let $S$ be a finite subset of $M_K$ with $|S|\geq 2$, containing all infinite places.
The multiplicative group of $S$-units in $K$ is defined by
\begin{align*}
U_S:=\{ u\in K:\ \| u\|_v= 1\ \text{for } v\in M_K\setminus S \}.
\end{align*}
It is well-known that $U_S$ is finitely generated and that its rank is $s:=|S|-1$.
Further, the torsion group of $U_S$ is the group $W_K$ of roots of unity in $K$. 
Denote by $\omega_K$ the order of $W_K$.

Let $S=\{ v_0,\kdots v_s\}$, and let $\{ \gamma_1\kdots \gamma_s\}$ be a fundamental set of $S$-units, or equivalently, a basis of $U_S/W_K$, i.e., every $u\in U_S$ can be expressed uniquely as $\zeta \gamma_1^{z_1}\cdots\gamma_r^{z_r}$ with $\zeta\in W_K$ and $z_1\kdots z_r\in\Zz$. Define the quantity
\begin{equation}\label{eq:2.3b}
R_S :=\left|\det\left(\begin{array}{ccc}
\log \|\gamma_1\|_{v_1}&\cdots &\log \|\gamma_s\|_{v_1}
\\
\vdots&\cdots&\vdots
\\
\log \|\gamma_1\|_{v_s}&\cdots &\log \|\gamma_s\|_{v_s}
\end{array}\right)\right|.
\end{equation}
This is independent of the choice of $v_1\kdots v_s$, 
$\gamma_1\kdots \gamma_s$.
For properties of the introduced notions, see e.g. Chapters 2.1 and 2.2 of Evertse and Gy\H{o}ry \cite{egybook}. The $S$-regulator as defined in \cite{egybook} is $d^sR_S$.

Now let $\Gamma$ be a finitely generated subgroup of rank $r$ of $(K^*)^k$.
Write elements of $\Gamma$ as $\xv =(x_1\kdots x_k)$.
Choose a basis $\uv_1\kdots \uv_r$ of 
$\Gamma/\Gamma_{{\rm tors}}$, by which we mean that $\uv_1\kdots\uv_r\in\Gamma$ and every $\mathbf{x}\in\Gamma$ can be expressed uniquely as 
\begin{equation}\label{eq:2.3a}
\mathbf{\zeta}\cdot \mathbf{u}_1^{z_1}\cdots\mathbf{u}_r^{z_r}\ \ \text{with } \mathbf{\zeta}\in\Gamma_{{\rm tors}},\ \ z_1\kdots z_r\in\Zz .
\end{equation}
Write 
$\mathbf{u}_i=(u_{i1}\kdots u_{ik})$ for $i=1\kdots k$.
Let $S$ be the smallest set of places of $K$ such that $\| u_{ij}\|_v=1$ for $v\in M_K\setminus S$, $i=1\kdots r$, $j=1\kdots k$. Then $S$ is finite, and we have
\begin{equation}\label{eq:3.6}
S=\{ v\in M_K :\, \text{there is $\xv\in\Gamma$ with $\| x_i\|_v\not= 1$ for at least one $i$}\}.
\end{equation}
Now for the logarithmic height of $\mathbf{x}\in\Gamma$, we have
\begin{align}\label{eq:2.4}
&h(\mathbf{x})=
\sum_{v\in S}\max \big(0,\ell_{v1}(\textbf{z})\kdots \ell_{vk}(\textbf{z})\big),
\\[-0.15cm]
\notag
&\quad\text{where } \ell_{vj}(\textbf{z}):=\sum_{i=1}^r z_i\log \|u_{ij}\|_v\ \ (v\in S,\, j=1\kdots k).
\end{align}
Let $\mu$ denote the Lebesgue measure on $\Rr$ with $\mu ([0,1])=1$, and $\mu^{r}$ the product measure on $\Rr^r$. Define
\begin{align}\label{eq:2.5}
&\CC (\Gamma):=\big\{ \mathbf{\xi}\in\Rr^r:\, \sum_{v\in S}\max \big(0,\ell_{v1}(\mathbf{\xi})\kdots \ell_{vk}(\mathbf{\xi})\big)\leq 1\big\},
\\
\label{eq:2.6}
&c(\Gamma ):= |\Gamma_{{\rm tors}}|\cdot \mu^r(\CC(\Gamma )).
\end{align}
The product formula implies $\sum_{v\in S}\ell_{vj}=0$ for $j=1\kdots r$, and from this one easily deduces that $\CC (\Gamma )$ is a bounded set. Hence, $c(\Gamma )$ is finite. The set $\CC (\Gamma )$ depends on the choice of $\mathbf{u}_1\kdots \mathbf{u}_r$, but the quantity $c(\Gamma )$ is independent of this choice.

Henceforth we write $\mathbf{x}=(x_1\kdots x_k)$,
$\mathbf{a}=(a_1\kdots a_k)$, etc.
For $\mathbf{a}\in (K^*)^k$
and $X>1$, define the set
\begin{align*}
\VV_{\Gamma ,\mathbf{a}}(X):=\Big\{ \xv\in\Gamma :\, &H(a_1x_1+\cdots +a_kx_k)\leq X,\ 
\\
&\sum_{i\in I}a_ix_i\not= 0\ \text{for each non-empty } I\subseteq\{ 1\kdots k\}\Big\}.
\end{align*}

Our first result is as follows.

\begin{theorem}\label{thm:2.1}
Let $K$ be a number field, $k\geq 2$ an integer, and
$\Gamma$ a finitely generated subgroup of $(K^*)^k$ of rank $r\geq 1$.
Assume that
\begin{align}\label{eq:2.7}
&\text{for each $i\not= j\in\{ 1\kdots k\}$ there is $\mathbf{x}\in\Gamma$} 
\\[-0.1cm]
\notag
&\text{such that $x_i/x_j$ is not a root of unity.}
\end{align}
Let $\mathbf{a}=(a_1\kdots a_k)\in (K^*)^k$. Then
\begin{align*}
|\VV_{\Gamma ,\mathbf{a}}(X)| =\, & 
c(\Gamma )(\log X)^r\cdot\left(1+O\Big(\Big(\frac{(\log\log X)^2}{\log X}\Big)^{1/(r+4)}\Big)\right)\
\\[-0.1cm]
&\text{as } X\to\infty, 
\end{align*}
where $c(\Gamma) $ is defined by \eqref{eq:2.6}. Here, the implied constant depends only on $\Gamma$ and $\mathbf{a}$.  
\end{theorem}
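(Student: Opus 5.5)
The plan is to compare $h(a_1x_1+\cdots+a_kx_k)$ with $h(\xv)$ for most $\xv\in\Gamma$, and then count lattice points. Put $\Sigma(\xv):=a_1x_1+\cdots+a_kx_k$.

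\emph{Upper bound on the height of the sum.} Trivially $h(\Sigma(\xv))\le h(\xv)+O(1)$, since at each place $\|\Sigma(\xv)\|_v\le C_v\|\xv\|_v$. Hence every $\xv$ with $h(\xv)\le \log X - C$ lies in $\VV_{\Gamma,\av}(X)$, except for the degenerate $\xv$. Condition \eqref{eq:2.7} should be used to show that the degenerate $\xv$ are negligible. Indeed, a vanishing subsum $\sum_{i\in I}a_ix_i=0$ with $|I|\ge 2$ is an $S$-unit equation. By the finiteness of non-degenerate solutions up to proportionality, and by induction on subsums, the degenerate $\xv$ lie in finitely many cosets of subgroups of $\Gamma$ of the form $\{x_i/x_j \text{ constant}\}$. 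By \eqref{eq:2.7} each such subgroup has rank $<r$, so these $\xv$ contribute only $O((\log X)^{r-1})$.

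\emph{Lower bound on the height of the sum.} The converse direction is the core of the argument. One wants: for all but few $\xv$ with $h(\xv)\le (1+\delta)\log X$, one has $h(\Sigma(\xv))\ge (1-\epsilon)h(\xv)$. The tool is a quantitative Parametric Subspace Theorem in the style of Evertse--Ferretti \cite{evfer}. Write $\xv=\mathbf{\zeta}\,\uv_1^{z_1}\cdots\uv_r^{z_r}$. A small value of $h(\Sigma(\xv))$ compared with $h(\xv)$ means that the linear forms $\Sigma$ and the coordinates $X_i$ ($i\ne i_v$), taken at the places $v\in S$, yield a small twisted height. The quantitative theorem then says the following. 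For $\epsilon$ given, the points $\mathbf z$ with $h(\Sigma(\xv))<(1-\epsilon)h(\xv)$ lie in a union of at most $C(\epsilon)=\exp(O(\epsilon^{-c}\log(1/\epsilon)))$ proper linear subspaces of $K^k$, together with a set of points of height $\ll\epsilon^{-c}$. Points in a proper subspace give a non-trivial linear relation among the $x_i$. Here I would use induction on $k$ via the Laurent / $S$-unit theorem structure: inside a subspace, the $\xv$ lie in finitely many cosets of lower-rank subgroups, with a number of cosets bounded explicitly in terms of $C(\epsilon)$. Each such coset contributes $\ll (\log X)^{r-1}$ lattice points. The exceptional set therefore has size
\[
\ll C(\epsilon)(\log X)^{r-1}+(\epsilon^{-c})^r.
\]

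\emph{Counting and optimizing.} The set $\{\mathbf z\in\Zz^r : h(\xv)\le T\}$ has $\mu^r(\CC(\Gamma))T^r+O(T^{r-1})$ points, since $\CC(\Gamma)$ is a bounded polytope-like region. Multiplying by $|\Gamma_{\rm tors}|$ gives $c(\Gamma)T^r$. Combining the two bounds, $|\VV_{\Gamma,\av}(X)|$ is sandwiched between $c(\Gamma)(\log X-C)^r$ minus the degenerate contribution, and $c(\Gamma)((1-\epsilon)^{-1}\log X)^r$ plus the exceptional contribution. This yields a relative error of
\[
O\bigl(\epsilon + C(\epsilon)/\log X\bigr).
\]
Choosing $\epsilon$ as a power of $(\log\log X)^2/\log X$, balanced against the growth of $C(\epsilon)$ and the coset counts (which involve $r$ in the exponent), should produce the stated exponent $1/(r+4)$.

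The main obstacle is the second step. One must make the subspace counting genuinely uniform in $\epsilon$, and convert "lies in few subspaces" into "lies in few lower-rank cosets of $\Gamma$" with explicit bounds. This requires a careful induction on $k$, and the ineffectivity of the final constants enters here.
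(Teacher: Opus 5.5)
Your architecture matches the paper's: use the Parametric Subspace Theorem to force $h(\Sigma(\xv))\geq(1-\epsilon)h(\xv)$ outside a union of proper subspaces, reduce each subspace to lower-rank cosets, then count lattice points. But the quantitative core is wrong as stated, and your last step does not follow.

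With $C(\epsilon)=\exp(O(\epsilon^{-c}\log(1/\epsilon)))$ subspaces, the relative error $\epsilon+C(\epsilon)/\log X$ can at best be made $(\log\log X)^{-1/c+o(1)}$. If $\epsilon$ is a power of $(\log\log X)^2/\log X$, then $C(\epsilon)/\log X\to\infty$. The stated exponent needs a number of subspaces polynomial in $\epsilon^{-1}$. That in turn needs a step you omit: the Parametric Subspace Theorem is for a \emph{fixed} twist, while each $\xv$ carries its own twist $e_{jv}(\xv)=\log\|x_j\|_v/h(\xv)$.

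The paper handles this as follows:
\begin{itemize}
\item It discretizes the twists with the covering Lemma~\ref{lem:3.3} into $\ll\epsilon^{-r}$ representatives $\ev$, with accuracy $\eta=\epsilon/(2k+2)$.
\item For each $\ev$ it applies Corollary~\ref{cor:3.2} with $Q=H(\xv)$. The coordinate $X_i$ is replaced by $\Sigma$ at a place $w$ where $\|\xv\|_w=\|x_i\|_w$ and $\|\Sigma(\xv)\|_w\leq\|x_i\|_wH(\xv)^{-\epsilon}$.
\item Evertse--Ferretti give $\ll\delta^{-3}(\log\delta^{-1})^2$ subspaces, uniformly in the twist, once $H(\xv)\geq C_2^{2k/\epsilon}$. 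Hence there are $\ll\epsilon^{-(r+3)}(\log\epsilon^{-1})^2$ subspaces in total.
\item Balancing $\epsilon(\log X)^r$ against $\epsilon^{-(r+3)}(\log\epsilon^{-1})^2(\log X)^{r-1}$ is exactly where $1/(r+4)$ comes from.
\end{itemize}
The $r$ in the exponent comes from the covering, not from coset counts.

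Two further ingredients are missing.

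First, the subspaces depend on $\epsilon=\epsilon(X)$, so you need $|\HH_\Gamma(Y)\cap T|\ll(\log Y)^{r-1}$ with a constant independent of $T$. The finiteness theorem or Laurent's theorem gives a $T$-dependent number of cosets, and ``induction on $k$'' does not repair this. The paper uses the uniform Evertse--Schlickewei--Schmidt bound (Lemma~\ref{lem:3.5}). A minimal vanishing subsum of $\sum_i b_ix_i=0$ forces $x_i/x_j=\lambda$ with $\lambda$ in a set of size $\leq c'(k,r)$. Each such set is a translate of $\Gamma_{i,j}=\{\xv\in\Gamma:\,x_i=x_j\}$, which has rank $<r$ by \eqref{eq:2.7}. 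It is then counted via $H(\xv_0^{-1}\xv)\leq Y^{k+1}$ and Lemma~\ref{lem:3.8}.

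Second, to count the exceptional $\xv\in\VV_{\Gamma,\av}(X)$ inside a subspace at all, you need an a priori bound $h(\xv)\ll\log X$ for them. Your restriction to $h(\xv)\leq(1+\delta)\log X$ presupposes it. The paper obtains $H(\xv)\ll X^2$ from Lemma~\ref{lem:3.4}, Evertse's inequality $H(\xv)\ll_{\varepsilon}H(\Sigma(\xv))^{1+\varepsilon}$ for non-degenerate $\xv$. This lemma, not the subspace-to-coset reduction, is the source of the ineffectivity.
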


By specializing to the case $\Gamma =\{ (\alpha_1^n\kdots \alpha_k^n):\, n\in\Zz\}$, i.e., $r=1$, 
we can deduce a result for linear recurrence sequences $u_n=\sum_{i=1}^k a_i\alpha_i^n$.
By modifying the proof of Theorem \ref{thm:2.1} we managed to improve and generalize this.

\begin{theorem}\label{thm:2.2}
Let $K$ be a number field, $k\geq 2$ an integer,  
$a_1\kdots a_k\in K[X]$ non-zero polynomials, and $\alpha_1\kdots \alpha_k\in K^*$. Assume that 
\begin{equation}\label{eq:2.7a} 
\text{for each $i\not= j\in \{ 1\kdots k\}$, $\alpha_i/\alpha_j$ is not a root of unity.}
\end{equation} 
Put $u_n:=\sum_{i=1}^k a_i(n)\alpha_i^n$ for $n\in\Zz_{\geq 0}$.
Then
\begin{align*}
|\{ n\in\Zz_{\geq 0}:\, H(u_n)\leq X\}| =\, &\frac{\log X}{\log H}\cdot\left(1+O\Big(\Big(\frac{(\log\log X)^2}{\log X}\Big)^{1/4}\Big)\right)\ 
\\
&\text{as } X\to\infty,
\end{align*}
where
\[
H:=H(\alpha_1\kdots\alpha_k)=\prod_{v\in M_K}\max (1,\|\alpha_1\|_v\kdots \|\alpha_k\|_v),
\]
and the implied constant depends only on $a_1\kdots a_k,\alpha_1\kdots \alpha_k$ and can be determined effectively.
\end{theorem}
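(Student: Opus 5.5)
Our plan is to show that, for all $n$ outside a sparse exceptional set, $h(u_n)=n\log H+O(\log n)$ with an effective implied constant, and then to count.

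\emph{Upper bound.} Put $\xv_n:=(a_1(n)\alpha_1^n\kdots a_k(n)\alpha_k^n)$. The height inequalities of Section~\ref{sec2} give
\[
h(u_n)\leq h(\xv_n)+\log k\leq n\log H+O(\log n),
\]
since $h(a_i(n))=O(\log n)$. In fact we get the sharper bound $\|u_n\|_v\leq \|k\|_v\max_i\|a_i(n)\|_v\,\|\alpha_i\|_v^n$ place by place. Thus $H(u_n)\leq X$ whenever $n\leq (\log X-C\log\log X)/\log H$, and this supplies the lower bound for the count. Here we also need $H>1$, and that $u_n\neq0$ for all but finitely many $n$. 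The first follows from \eqref{eq:2.7a}, since not all $\alpha_i$ can be roots of unity. The second is the Skolem--Mahler--Lech theorem for non-degenerate recurrences, but we want effectivity, so we instead take it from the lower bound below.

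\emph{Lower bound (main obstacle).} We need $h(u_n)\geq n\log H-O(\log n)$ for all $n$ except possibly $O(\log X)^{3/4}$-many $n\leq N:=2\log X/\log H$; any bound of size $\log X\cdot\epsilon$ with $\epsilon=((\log\log X)^2/\log X)^{1/4}$ suffices. The Subspace theorem route is ineffective, but since $r=1$ we can use linear forms in logarithms, as Everest did, and the implied constant then comes out effective.

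Let $S$ be the places where some $\alpha_i$ is not a unit, together with the infinite places. For each $v\in S$, order the indices so that $\|\alpha_{i_1}\|_v\geq\|\alpha_{i_2}\|_v\ge\cdots$. Condition \eqref{eq:2.7a} makes the dominant root unique for each $v$ only up to ties of equal absolute value, and these ties are the difficulty.

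\begin{itemize}
\item \emph{Archimedean $v$ with ties.} The $\alpha_i$ of maximal absolute value have ratios of modulus $1$ that are not roots of unity. By Baker's theorem, $\bigl|\sum_{i\in I}a_i(n)(\alpha_i/\alpha_{i_1})^n\bigr|\geq n^{-C}$ except when this sum is small. We would like to bound the number of such exceptional $n$ effectively, following the standard treatment of lower bounds $|u_n|\ge |\alpha|^n n^{-C}$ (cf.\ Evertse, van der Poorten--Schlickewei); effective versions exist when the dominant roots number at most $3$, but not in general.
\item \emph{General fallback.} We therefore only need $\log\|u_n\|_v\geq n\log\max_i\|\alpha_i\|_v-(\text{error})$ on average over $n\le N$. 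We argue as follows. For a fixed $v$, the polynomial-exponential sum $\sum_{i\in I}a_i(n)\beta_i^n$ with $|\beta_i|=1$ is an almost-periodic function of $n$. Using an effective Diophantine approximation of $(\arg\beta_i)$ by rationals with denominator at most $Q$ (Dirichlet), together with Tur\'an's lemma on exponential sums over blocks of consecutive $n$ of length $L$, we see that it is smaller than $n^{-C}$ for at most a proportion $\epsilon$ of $n$ in each block. The parameters $L,Q$ are to be optimized against $\log X$; this is where the exponent $1/4$ and the factor $(\log\log X)^2$ should come from.
\item \emph{Non-archimedean $v$.} Here ties are handled by $p$-adic linear forms in logarithms (Yu), giving $\ordfp$ of the dominant subsum $O(\log n)$ for all large $n$ except when that subsum vanishes identically, which is excluded by \eqref{eq:2.7a}.
\end{itemize}

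Summing over $v\in S$ with the product formula gives
\[
h(u_n)\geq\sum_{v\in S}\log\max(1,\|u_n\|_v)\geq n\log H-O(\log n)
\]
outside the exceptional set.

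\emph{Counting.} The $n$ with $H(u_n)\leq X$ are then all $n\leq (\log X)/\log H+O(\log\log X)$, apart from the exceptional $n$, which contribute $O(\epsilon\log X)$. The error $O(\log\log X)$ is absorbed into $O(\epsilon\log X)$. This gives the formula of the statement with an effective implied constant.
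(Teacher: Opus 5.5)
Your proposal has a genuine gap: the decisive lower bound for $h(u_n)$ off a small exceptional set is not established, and even the form you aim for would not suffice.

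\textbf{The lower bound is not established.}

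At archimedean places with three or more dominant roots you concede that Baker's method fails. Your fallback, that Dirichlet plus Tur\'an's lemma gives at most a proportion $\epsilon$ of bad $n$ in each block, is only asserted. A Tur\'an/Vandermonde argument shows merely that among any $m$ consecutive $n$ at least one is good.

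At finite places, linear forms in logarithms (Yu) control only two-term expressions of the shape $\beta\gamma^n-1$. When three or more $\alpha_i$ share the maximal $v$-adic size, your claim $\ordfp(\cdot)=O(\log n)$ for all large $n$ is unsupported.

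\textbf{A density bound for $n\le N$ would not suffice anyway.}

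A density statement for $n\le N=2\log X/\log H$ does not control $\UU(X)$. That set contains every zero of $u_n$, and every $n$ however large at which strong cancellation occurs at the places of $S$. Nothing in your argument shows these $n$ are $\le N$, or even finite in number.

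You planned to obtain finiteness of zeros from the lower bound. But an effective bound $h(u_n)\ge cn-C$ valid for all $n$ would effectively bound the size of the zeros of an arbitrary non-degenerate recurrence, since $h(0)=0$. That is the open effective Skolem problem.

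\textbf{The missing idea.}

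Only the \emph{number} of exceptional $n$, over all of $\Zz_{\ge 0}$, has to be bounded effectively, and the quantitative Subspace Theorem does exactly that. Your premise that this route is ineffective is mistaken. Corollary~\ref{cor:3.2} gives an explicit bound for the number of subspaces and an explicit threshold for $Q$; only the subspaces themselves are not computable.

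The paper applies it with:
\begin{itemize}
\item $\xv=(a_1(n)\alpha_1^n,\dots,a_k(n)\alpha_k^n)$ and $Q=H^n$;
\item the fixed exponents $e_{jv}=\log\|\alpha_j\|_v/\log H$ (no covering lemma is needed);
\item $d_{iw}=e_{iw}-\varepsilon$ and $L_{iw}=X_1+\dots+X_k$.
\end{itemize}

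Then all $n\gg\varepsilon^{-2}$ with $\|u_n\|_w\le\max_j\|a_j(n)\alpha_j^n\|_wH^{-n\varepsilon}$ for some $w\in S$ give vectors lying in $\ll\varepsilon^{-3}(\log\varepsilon^{-1})^2$ proper subspaces. Each subspace yields a non-trivial relation $\sum_j b_ja_j(n)\alpha_j^n=0$, which has at most $c'(\ell)$ solutions by Lemma~\ref{lem:3.7}. That lemma also disposes of the zeros of $u_n$ effectively.

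Note that this only requires a loss of $H^{-n\varepsilon}$ per place rather than your $n^{-O(1)}$. Balancing $\varepsilon^{-3}(\log\varepsilon^{-1})^2$ against the loss $\varepsilon\log X$ is what produces the exponent $1/4$ and the factor $(\log\log X)^2$.
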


Further potential applications of our method of proof are to linear combinations of linear recurrence sequences $\{ b_1u_m+b_2v_n:\, m,n\in\Zz_{\geq 0}\}$ (see \cite{tvyz} for a related result) or perhaps even $\{ b_1u_{n_1,1}+\cdots +b_lu_{n_l,l}:\, n_1\kdots n_l\in\Zz_{\geq 0}\}$.
Most generally, one might consider exponential polynomials
\[
\Big\{ \sum_{i=1}^k f_i(n_1\kdots n_l)\alpha_{i1}^{n_1}\cdots \alpha_{il}^{n_l}:\, n_1\kdots n_l\in\Zz\Big\},
\]
where $f_i\in K[X_1\kdots X_l],\alpha_{i1}\kdots \alpha_{il}\in K^*$ for $i=1\kdots k$. 

We can sharpen Theorem \ref{thm:2.1} if we strengthen \eqref{eq:2.7}.
As before, we write $\xv =(x_1\kdots x_k)$, and put $x_0:=1$.

\begin{theorem}\label{thm:2.3a}
Let $K$ be a number field, $k\geq 2$ an integer, and
$\Gamma$ a finitely generated subgroup of $(K^*)^k$ of rank $r\geq 1$.
Let $S$ be given by \eqref{eq:3.6}. Assume that 
\begin{align}\label{eq:2.7b}
&\text{for each $v\in S$, $i\not= j\in\{ 0\kdots k\}$ there is $\xv\in\Gamma$ with} 
\\[-0.1cm]
\notag
&\text{$\|x_i\|_v\not=\|x_j\|_v$.}
\end{align}
Let $\mathbf{a}\in (K^*)^k$. Then
\[
|\VV_{\Gamma ,\mathbf{a}}(X)| = 
c(\Gamma )(\log X)^r +O((\log X)^{r-1})\ \ \text{as } X\to\infty . 
\]
Here, the implied constant depends only on $\Gamma$ and $\mathbf{a}$.  
\end{theorem}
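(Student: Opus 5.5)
Write $\beta(\xv):=a_1x_1+\cdots+a_kx_k$ and $h(\xv)=\sum_{v\in S}\max(0,\ell_{v1}(\zv),\ldots,\ell_{vk}(\zv))$ as in \eqref{eq:2.4}. The plan is to show that, for non-degenerate $\xv\in\Gamma$,
\begin{equation*}
h(\beta(\xv))=h(\xv)+O(1)
\end{equation*}
outside a set of $\xv$ that is too small to affect the main term. Once this is known, the count reduces to counting lattice points $\zv\in\Zz^r$ (each weighted by $|\Gamma_{\rm tors}|$) in dilates $(\log X+O(1))\CC(\Gamma)$. The function $\xi\mapsto\sum_v\max(0,\ell_{v1}(\xi),\ldots,\ell_{vk}(\xi))$ is piecewise linear, convex and positively homogeneous, and $\CC(\Gamma)$ is a bounded convex polytope. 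The standard lattice point count for such a polytope gives $\mu^r(\CC(\Gamma))T^r+O(T^{r-1})$ points in $T\cdot\CC(\Gamma)$. Replacing $T$ by $T+O(1)$ also costs only $O(T^{r-1})$, which yields the stated formula.

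\textbf{Upper bound on $h(\beta(\xv))$.} This direction is elementary. At $v\in S$ we have $\max(1,\|\beta\|_v)\le C_v\max(1,\|\xv\|_v)$. At $v\notin S$ we have $\|x_i\|_v=1$, so $\|\beta\|_v\le\max_i\|a_i\|_v$, and this equals $1$ for all but finitely many $v$. Together these give $h(\beta(\xv))\le h(\xv)+O(1)$.

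\textbf{Lower bound on $h(\beta(\xv))$.} This is the heart of the argument. Write $h(\beta)\ge\sum_{v\in S}\log\max(1,\|\beta\|_v)$. Fix $v\in S$ and put $x_0=1$. Let $M_v=\max(0,\ell_{v1},\ldots,\ell_{vk})$. We want $\log\max(1,\|\beta\|_v)\ge M_v-O(1)$, and this fails only when there is cancellation, i.e.\ when the maximum is attained nearly simultaneously by two indices. Hypothesis \eqref{eq:2.7b} says that for $i\ne j\in\{0,\ldots,k\}$ the linear form $\ell_{vi}-\ell_{vj}$ (with $\ell_{v0}=0$) is not identically zero. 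So the region where two maximal terms lie within $B$ of each other is a union of slabs $|\ell_{vi}(\zv)-\ell_{vj}(\zv)|\le B$ intersected with $T\cdot\CC(\Gamma)$. Such a slab contains only $O(BT^{r-1})$ lattice points.

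Off these slabs, one term $a_ix_i$ with $i\ge1$ dominates at $v$ by the factor $e^{B}$, so $\|\beta\|_v\asymp\|x_i\|_v$ for large fixed $B$. If instead the term $x_0=1$ dominates, then all $\|x_i\|_v$ are small, $M_v=0$, and there is nothing to prove. Hence outside $O(T^{r-1})$ exceptional $\zv$ we obtain $h(\beta(\xv))\ge h(\xv)-O(1)$, and inside them we still have the upper bound. Note that the exceptional $\zv$ do not need to be controlled at all: there are only $O(T^{r-1})$ of them with $h(\xv)\le T+O(1)$.

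\textbf{The main obstacle.} The difficulty is the exceptional $\xv$ where cancellation makes $h(\beta(\xv))$ much smaller than $h(\xv)$. Such $\xv$ could be counted in $\VV_{\Gamma,\av}(X)$ even though $h(\xv)$ is huge, and the slab count above only applies to $\zv$ in a bounded dilate. To exclude this, I will invoke the result from \cite{evertse} derived from the Subspace Theorem. It states that for non-degenerate $\xv$ in a finitely generated group, $h(\beta(\xv))\ge(1-\epsilon)h(\xv)-C(\epsilon)$. Taking, say, $\epsilon=1/2$ shows that every $\xv\in\VV_{\Gamma,\av}(X)$ satisfies $h(\xv)\le2\log X+O(1)$. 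Together with a dyadic refinement of the slab estimate, in which cancellation of size $\delta h(\xv)$ is again bounded through the Subspace Theorem, this confines all contributions to $\zv$ in $O(\log X)\cdot\CC(\Gamma)$ lying in slabs of bounded width. That gives an error of $O((\log X)^{r-1})$. This use of the Subspace Theorem is the ineffective step, and it is why the implied constant is not computable.
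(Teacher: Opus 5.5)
This is the paper's proof. Your near-tie slabs $|\ell_{vi}(\zv)-\ell_{vj}(\zv)|\le B$ for $v\in S$, $i\ne j\in\{0,\ldots,k\}$ are the sets $\VV_{i,j,w}(X)$. Lemma~\ref{lem:3.4}, in the form $H(\xv)\ll H(\beta(\xv))^2$, confines $\VV_{\Gamma,\av}(X)$ to $h(\xv)\le 2\log X+O(1)$. Lemma~\ref{lem:3.10} counts the slab points. Off the slabs $H(\beta(\xv))\asymp H(\xv)$, so you sandwich between dilates and apply Lemma~\ref{lem:3.8}.

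The ``dyadic refinement'' in your last paragraph is not needed. Once $h(\xv)\le 2\log X+O(1)$, the exceptional tuples lie in bounded-width slabs intersected with $(2\log X+O(1))\CC(\Gamma)$. These already contain only $O((\log X)^{r-1})$ lattice points, so no further appeal to the Subspace Theorem is required.

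One step is missing, in the lower bound. $\VV_{\Gamma,\av}(X)$ contains only tuples with no vanishing subsum. So to pass from the lattice-point count in $(\log X-O(1))\CC(\Gamma)$ to a lower bound for $|\VV_{\Gamma,\av}(X)|$, you must show that the degenerate $\xv$ with $h(\xv)\le\log X$ number $O((\log X)^{r-1})$. You never address this. The paper gets it from \eqref{eq:4.9b}, i.e.\ Lemma~\ref{lem:3.9}, which uses the unit-equation bound of Lemma~\ref{lem:3.5} together with \eqref{eq:2.7}; the latter is implied by \eqref{eq:2.7b}.

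Within your set-up there is a cheaper fix. Suppose $\sum_{i\in I}a_ix_i=0$, and take any $v\in S$ (note $S\ne\emptyset$ since $r\ge 1$). Then the largest of the $\|a_ix_i\|_v$, $i\in I$, exceeds the second largest by at most a factor bounded in terms of $\Gamma$ and $\av$. Hence for $B$ large every degenerate tuple already lies in one of your slabs. Equivalently, your dominance argument shows that off the slabs every subsum is non-zero. With this sentence added, your argument is complete.
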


An important special case is, when 
\[
\Gamma =\Gamma_1^k=\{ (x_1\kdots x_k):\, x_1\kdots x_k\in\Gamma_1\},
\]
where $\Gamma_1$ is a finitely generated subgroup of $K^*$. Note that in this case, $\rank\Gamma =k\cdot \rank\Gamma_1$.

\begin{theorem}\label{thm:2.3}
Let $K$ be a number field, $k\geq 2$ an integer, 
$\Gamma_1$ a subgroup of $K^*$ of rank $r_1\geq 1$, and $\mathbf{a}\in (K^*)^k$.
Then  
\[
|\VV_{\Gamma_1^k ,\mathbf{a}}(X)| = c(\Gamma_1^k )(\log X)^{kr_1}+O((\log X)^{kr_1-1})\ \ \text{as } X\to\infty .
\]
Here, the implied constant depends only on $\Gamma_1$, $k$, and $\mathbf{a}$.
\end{theorem}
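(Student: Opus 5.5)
Theorem \ref{thm:2.3} should follow by applying Theorem \ref{thm:2.3a} to $\Gamma=\Gamma_1^k$, so the plan is to verify hypothesis \eqref{eq:2.7b} for this group. First I would record that $\Gamma_1^k$ is finitely generated of rank $kr_1\geq 1$, and that $\Gamma_1$ is finitely generated as well (this is implicit in the statement, since $\Gamma_1$ has finite rank and is meant to be finitely generated). Next I would determine the set $S$ of \eqref{eq:3.6} for $\Gamma_1^k$. Since an element $\xv\in\Gamma_1^k$ has arbitrary coordinates from $\Gamma_1$, this $S$ is exactly the set of places $v$ for which some $x\in\Gamma_1$ satisfies $\|x\|_v\neq 1$. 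Theorem \ref{thm:2.3a} then gives the asymptotic formula with main term $c(\Gamma_1^k)(\log X)^{kr_1}$ and error $O((\log X)^{kr_1-1})$, and the implied constant depends only on $\Gamma_1^k$ and $\mathbf{a}$, hence only on $\Gamma_1$, $k$ and $\mathbf{a}$.

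The main step is checking \eqref{eq:2.7b}. Fix $v\in S$, and pick $x\in\Gamma_1$ with $\|x\|_v\neq 1$. Take indices $i\neq j$ in $\{0\kdots k\}$ and put $x_0=1$. If one of them is $0$, say $j=0$ and $i\geq 1$, I would take $\xv$ to have $i$-th coordinate $x$ and all other coordinates $1$. Then $\|x_i\|_v=\|x\|_v\neq 1=\|x_0\|_v$. If $i,j\geq 1$, I would take $\xv$ with $x$ in coordinate $i$ and $1$ in every other coordinate, including coordinate $j$. Then $\|x_i\|_v\neq 1=\|x_j\|_v$. In both cases $\xv\in\Gamma_1^k$, because $1\in\Gamma_1$.

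I do not expect a real obstacle, since all the difficulty sits in Theorem \ref{thm:2.3a}. The one point needing care is the definition of $S$. It is built from a chosen basis $\uv_1\kdots\uv_r$ of $\Gamma_1^k/(\Gamma_1^k)_{\rm tors}$, but \eqref{eq:3.6} identifies it intrinsically. So I would use that intrinsic description and not depend on any particular basis. I would also note that $S$ is nonempty: since $r_1\geq 1$, $\Gamma_1$ contains a non-torsion element, and by Kronecker's theorem such an element has $\|x\|_v\neq 1$ for some $v$.
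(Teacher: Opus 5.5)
Your proposal is correct and is essentially the paper's proof: it deduces Theorem~\ref{thm:2.3} from Theorem~\ref{thm:2.3a} with $r=kr_1$. As you do, it identifies $S$ as the set of places $v$ at which some $x\in\Gamma_1$ has $\|x\|_v\neq 1$, and verifies \eqref{eq:2.7b} using the tuple with $x$ in coordinate $i$ and $1$ in all other coordinates. Your side remark on finite generation is also fine, since $K^*/W_K$ is free abelian and so every finite-rank subgroup of $K^*$ is automatically finitely generated.
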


In general, it seems to be a difficult problem to derive an explicit expression for $c(\Gamma )$, but this can be done in the case that $\Gamma =U_S^k$, where $S$ is a finite set of places of $K$, containing all infinite places. 
Write $S=\{ v_0\kdots v_s\}$ and pick a basis
$\{ \gamma_1\kdots \gamma_s\}$ of $U_S/W_K$.
Using the product formula in the form $\log\| u\|_{v_0}=-\sum_{j=1}^s\log\| u\|_{v_j}$ for $u\in U_S$, together with \eqref{eq:2.3b},
one can show that $\CC (U_S^k)$ is the image under a linear transformation of determinant $R_S^{-k}$ of the set
\[
\EE_{s,k}:=\Big\{ \mathbf{\xi}=(\xi_{v_i,j})_{i=1\kdots s, j=1\kdots k}\in\Rr^{ks}:\,
\sum_{i=0}^s\max (0,\xi_{v_i,1}\kdots \xi_{v_i,k})\leq 1\Big\},
\]
where $\xi_{v_0,j}=-\sum_{i=1}^s \xi_{v_i,j}$ for $j=1\kdots k$. Kerber, Tichy, and Weitzer \cite{ktw}
proved that this set (called by them `Everest polytope' since it was introduced by Everest \cite{everest1}) has measure $((k+1)s)!/(ks)!(s!)^{k+1}$. Noticing that $(U_S^k)_{{\rm tors}}=W_K^k$  and thus, has cardinality $\omega_K^k$, it follows that
\[
c(U_S^k)=\frac{\omega_K^k}{R_S^k}\cdot \frac{((k+1)s)!}{(ks)!(s!)^{k+1}}.
\]

In our next result, we do not count tuples $(x_1\kdots x_k)\in\Gamma_1^k$, but instead numbers $\alpha$ representable in the form $a_1x_1+\cdots +a_kx_k$. Further, we do not take a fixed tuple of coefficients $(a_1\kdots a_k)$, but instead a finite set of such tuples.

Denote by $S_k$ the permutation group of $\{ 1\kdots k\}$. Henceforth, we write
\begin{align*}
&\av=(a_1\kdots a_k),\ \ \bv =(b_1\kdots b_k),\ \ 
\av /\bv=(a_1/b_1\kdots a_k/b_k),
\\ 
&\sigma (\av )=(a_{\sigma (1)}\kdots a_{\sigma (k)})\ \ (\sigma\in S_k). 
\end{align*}
Let $\AA$ be a finite set of tuples in $(K^*)^k$ with  the following properties:
\begin{align}
\label{eq1.a}
&\text{for each $\av\in\AA,\,i,j\in\{ 1\kdots k\}$ we have $a_i=a_j$ or $a_i/a_j\not\in \Gamma_1$;}
\\
\label{eq1.c}
&\text{for each $\av ,\bv \in\AA$ we have $\av =\bv$ or $\av /\bv\not\in \Gamma_1^k$;}
\\
\label{eq1.b}
&\text{for each $\av\in\AA,\,\sigma\in S_k$ we have $\sigma (\av )\in\AA$.} 
\end{align}

From Theorem \ref{thm:2.3} we deduce the following result.
In the deduction we closely follow Frei, Tichy, and Ziegler \cite{ftz}.

\begin{theorem}\label{thm:2.4}
Let $K$ be a number field, $k$ an integer $\geq 1$, 
and $\Gamma_1$ a finitely generated subgroup of $K^*$ of rank $r_1>0$. Further, 
	let $\AA$ be a finite set of tuples in $(K^*)^k$ with \eqref{eq1.a}, \eqref{eq1.c}, \eqref{eq1.b}.
		Let $\TT_{\AA}(X)$ be the set of $\alpha\in K^*$ with the following properties:
	\begin{equation}\label{eq1.2}
	\left\{
	\begin{array}{l}
	H(\alpha )\leq X;
	\\[0.1cm]
	\text{there are $(a_1\kdots a_k)\in\AA$, $x_1\kdots x_k\in\Gamma_1$}
	\\	
	\text{such that $\alpha =a_1x_1+\cdots +a_kx_k$}.
	\end{array}\right.
	\end{equation}
		Then
	\[
	|\TT_{\AA}(X)|=
	\frac{|\AA|\cdot c(\Gamma_1^k )}{k!}(\log X)^{kr_1}+O((\log X)^{kr_1-1})\ \ \text{as } X\to\infty .
\]
Here, the implied constant depends only on $\Gamma_1$, $k$, and $\AA$.
\end{theorem}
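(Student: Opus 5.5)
We deduce Theorem \ref{thm:2.4} from Theorem \ref{thm:2.3}, following Frei, Tichy and Ziegler. The idea is to count pairs $(\av ,\xv)$ with $\av\in\AA$ and $\xv\in\VV_{\Gamma_1^k,\av}(X)$, and then to show that, apart from an error term, every $\alpha\in\TT_{\AA}(X)$ comes from exactly $k!$ such pairs.

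The first step is to reduce to non-degenerate representations. Suppose $\alpha=\sum a_ix_i$ has a vanishing subsum. Deleting it gives a representation of $\alpha$ as a shorter sum $\sum_{i\in I}a_ix_i$, which we may take to be non-degenerate. For fixed $I$ with $|I|=m<k$, Theorem \ref{thm:2.3} applied with $m$ in place of $k$ bounds the number of such $\alpha$ by $O((\log X)^{mr_1})=O((\log X)^{kr_1-1})$. When $m=1$ we instead use the direct count $|\{x\in\Gamma_1:H(a_ix)\le X\}|=O((\log X)^{r_1})$. Since $\AA$ is finite, these degenerate contributions are negligible.

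The second step relates the count of non-degenerate pairs to $|\TT_{\AA}(X)|$. By Theorem \ref{thm:2.3},
\[
N(X):=\sum_{\av\in\AA}|\VV_{\Gamma_1^k,\av}(X)|=|\AA|\,c(\Gamma_1^k)(\log X)^{kr_1}+O((\log X)^{kr_1-1}).
\]
Every permutation $\sigma\in S_k$ maps a pair $(\av,\xv)$ to $(\sigma(\av),\sigma(\xv))$. By \eqref{eq1.b} the image is again one of our pairs, and it represents the same $\alpha$. The pairs in one orbit are distinct, except when $a_ix_i=a_jx_j$ for some $i\ne j$; these coincidences are handled in the third step. We must then show that, outside a set of $\alpha$ of size $O((\log X)^{kr_1-1})$, each $\alpha$ has only one non-degenerate representation up to this $S_k$-action.

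Suppose $\alpha$ has two representations $\sum a_ix_i=\sum b_jy_j$. Then $\sum a_ix_i-\sum b_jy_j=0$ is a unit equation in the group $\Gamma_1$ with coefficients from a finite set. By the finiteness theorem for $S$-unit equations (Evertse, van der Poorten--Schlickewei), every non-degenerate solution has all ratios lying in a finite set. Consequently there is a partition of the $2k$ terms into minimal vanishing subsums. A vanishing subsum consisting only of $a$-terms, or only of $b$-terms, is excluded by non-degeneracy. Hence each block contains terms from both sides. If every block is a pair $a_ix_i=b_jy_j$, then $b_j/a_{i}=x_i/y_j\in\Gamma_1$, so by \eqref{eq1.c}, after permuting, $\bv=\sigma(\av)$ and $\yv=\sigma(\xv)$; the two representations lie in the same orbit. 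Otherwise some block has at least three terms. In that block the terms satisfy a non-degenerate unit equation, so their ratios lie in a finite set. This imposes a multiplicative relation $x_i/x_j\in$ a finite set, or $x_i/y_j\in$ a finite set, which lowers the number of free parameters by at least $r_1$. The number of such $\alpha$ is then bounded by $O((\log X)^{(k-1)r_1})$: one substitutes and counts sums with fewer free variables, again using Theorem \ref{thm:2.3} or a crude height count.

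The third step deals with $\alpha$ that have a representation with $a_ix_i=a_jx_j$ for some $i\neq j$. Then $a_i/a_j=x_j/x_i\in\Gamma_1$, so $a_i=a_j$ by \eqref{eq1.a}, and hence $x_i=x_j$. These $\alpha$ are parametrized by $k-1$ free elements of $\Gamma_1$ and contribute $O((\log X)^{(k-1)r_1})$. For all remaining $\alpha$ the $S_k$-orbit has exactly $k!$ elements. Therefore $|\TT_{\AA}(X)|=N(X)/k!+O((\log X)^{kr_1-1})$, which gives the theorem.

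The main obstacle is the second step: bounding uniformly the $\alpha$ whose representations meet in a block of size at least three. Counting these needs care. The first thing to try is to apply the unit-equation finiteness to each block of size at least three. This makes one variable a fixed multiple of another, after which $\alpha$ is a non-degenerate sum over a group of rank at most $(k-1)r_1$, and Theorem \ref{thm:2.3} (or a trivial height bound) gives the estimate.
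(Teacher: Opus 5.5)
Your overall route is the paper's. Your $N(X)$ is Lemma~\ref{lem6.0}; by \eqref{eq1.c}, pairs $(\av,\xv)$ correspond bijectively to tuples $(a_1x_1,\dots,a_kx_k)$. Your third step is Lemma~\ref{lem6.1}, your second step is Lemma~\ref{lem6.2}, and your first step is the disposal of $\TT_{\AA}^{**}(X)$. However, the second alternative you allow in step 2 does not work. A relation ``$x_i/y_j$ lies in a finite set'' links a variable of one representation to a variable of the other, so it constrains neither representation and removes no free parameters from anything you are counting. What you need is a relation between two terms of the \emph{same} representation, and the missing observation is that such a relation always exists. Each block contains at least one $a$-term and one $b$-term. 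If every block contained exactly one $a$-term there would be exactly $k$ blocks; distributing the $k$ $b$-terms over them with at least one per block then forces every block to be a pair. So if the two representations are not in the same $S_k$-orbit, some block contains two terms $a_hx_h, a_ix_i$ of the first representation; this is the paper's choice $|I|\geq 2$. Lemma~\ref{lem:3.5} then gives $x_i=\lambda x_h$ with $\lambda$ in a finite set depending only on $k,\Gamma_1,\AA$. Now $a_hx_h+a_ix_i=(a_h+\lambda a_i)x_h$ with $a_h+\lambda a_i\neq 0$ by non-degeneracy. The sum becomes a non-degenerate $(k-1)$-term sum, and Theorem~\ref{thm:2.3} (or a direct count when $k=2$) gives $O((\log X)^{(k-1)r_1})$.

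There is also a bookkeeping gap in the passage to $|\TT_{\AA}(X)|=N(X)/k!+O((\log X)^{kr_1-1})$. Knowing that each $\alpha$ outside a small set $E$ of values carries exactly $k!$ pairs is not enough. You also need the number of \emph{pairs} lying over $E$ to be $O((\log X)^{kr_1-1})$. The counting argument above supplies this: every pair whose value has a representation outside its orbit itself satisfies an internal relation $x_i=\lambda x_h$. So bound exceptional pairs rather than exceptional values $\alpha$, as the paper does with tuples in Lemmas~\ref{lem6.1} and~\ref{lem6.2}. Alternatively, Lemma~\ref{lem:3.5} shows that each $\alpha\neq 0$ has at most $|\AA|\,c(k,kr_1)$ non-degenerate representations. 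With these two repairs your proof coincides with the paper's.
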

\noindent
Note that in \eqref{eq1.2} we allow subsums of $a_1x_1+\cdots +a_kx_k$ to be $0$.

Two special cases of Theorem \ref{thm:2.4} are of interest:
\begin{itemize}
\item[(1)] Let $\AA =\BB^k$ ($k$ times cartesian product), where $\BB$ is a finite subset of $K^*$
such that for all $b,b'\in\BB$ with $b\not= b'$ we have $b/b'\not\in \Gamma_1$. Note that $\BB^k$ satisfies 
\eqref{eq1.a}, \eqref{eq1.c}, \eqref{eq1.b}. Hence,
\[
|\TT_{\BB^k}(X)|=\frac{|\BB |^k\cdot c(\Gamma_1^k)}{k!}(\log X)^{kr_1}+O((\log X)^{kr_1-1}).
\]
\item[(2)] Let $\AA =\{\av\}$ where $\av\in (K^*)^k$ is a tuple with \eqref{eq1.a}.
Of course, \eqref{eq1.b} need not be satisfied. But let $\AA':=\{ \sigma (\av ):\, \sigma\in S_k\}$;
then $\TT_{\{\av\}}(X)=\TT_{\AA'}(X)$, and $\AA'$ satisfies \eqref{eq1.a}, \eqref{eq1.c}, \eqref{eq1.b}. Define $G(\av )$ to be the subgroup of $\sigma\in S_k$ such that $\sigma (\av )=\av$. Then $|\AA'| =k!/|G(\av )|$. So,
\[
|\TT_{\{ \av\}}(X)|=|\TT_{\AA'}(X)|=\frac{c(\Gamma_1^k)}{|G(\av )|}(\log X)^{kr_1}+O((\log X)^{kr_1-1}).
\]
\end{itemize}

\section{Auxiliary results}\label{sec3}

In this section, we have collected the main tools for our proofs.
We start with a quantitative version of the so-called Parametric Subspace Theorem.
The first result of this type was proved in 1996 by Schlickewei \cite{schl1996}, then it was improved and generalized in 2002 by Evertse and Schlickewei \cite{evschl}. The up to now sharpest version was obtained in 2013 by Evertse and Ferretti \cite{evfer}.
The next result is a consequence of their Theorem 2.1. The following notation is used:
\begin{itemize}
\item[-]  $K$ is a number field, $S$ a finite set of places of $K$ (not necessarily containing all infinite places) and $k\geq 2$ an integer;
\item[-]
$\LL =( L_{iv}:\, v\in S,\, i=1\kdots k)$ is a  tuple of linear forms in $K[X_1\kdots X_k]$ such that for each $v\in S$, $\{ L_{iv}:\, i=1\kdots k\}$ is linearly independent;
\item[-]
$\cv =(c_{iv}:\, v\in S, i=1\kdots k)$ is a tuple of reals.
\end{itemize}
For every parameter value $Q>1$ we define the twisted height
\begin{equation}\label{eq:3.1}
H_{\LL, \cv ,Q}(\xv):=
\prod_{v\in S}\Big(\max_{1\leq i\leq k} \| L_{iv}(\xv )\|_vQ^{-c_{iv}}\Big)\cdot 
\prod_{v\in M_K\setminus S}\|\xv\|_v\ \ \text{for }\xv\in K^n.
\end{equation}

\begin{lemma}\label{lem:3.1}
There are constants $C_1,C_2>0$, depending only on $k$, $K$ and $\LL$, with the following property. Let $0<\delta <1$ and assume that
\begin{equation}\label{eq:3.2}
\sum_{i=1}^k c_{iv}=0\ \text{for } v\in S,\ \ \sum_{v\in S}\max (c_{1v}\kdots c_{kv})\leq 1.
\end{equation}
Then there are proper linear subspaces $T_1\kdots T_t$ of $K^k$, with 
\begin{equation*}
t\leq C_1\delta^{-3}(\log \delta^{-1})^2
\end{equation*}
such that for every real $Q$ with  
\begin{equation*}
Q\geq C_2^{1/\delta}
\end{equation*}
there is $T_i\in\{ T_1\kdots T_t\}$ with $\{ \xv\in K^n:\, H_{\LL ,\cv ,Q}(\xv )\leq Q^{-\delta}\}\subset T_i$.
\end{lemma}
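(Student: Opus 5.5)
The plan is to obtain Lemma \ref{lem:3.1} by specializing Theorem 2.1 of Evertse and Ferretti \cite{evfer}, after putting our data into the form required there. First I enlarge $S$, if necessary, to a set $S'$ containing all infinite places. For $v\in S'\setminus S$ I put $L_{iv}:=X_i$ and $c_{iv}:=0$ for $i=1\kdots k$. For $v\notin S$ we have $\max_i\|X_i(\xv)\|_v=\|\xv\|_v$, so the twisted height \eqref{eq:3.1} is unchanged by this. Condition \eqref{eq:3.2} still holds, since the new places contribute $0$ to both sums. The enlarged system $\LL'$ depends only on $K$, $k$ and $\LL$, so constants depending on $\LL'$ are admissible. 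Our hypothesis $\sum_i c_{iv}=0$ for each $v$ implies the global normalization $\sum_{v}\sum_i c_{iv}=0$ used in \cite{evfer}. Together with $\sum_v\max_i c_{iv}\leq 1$, this is exactly their hypothesis on the twist, after matching our normalized absolute values $\|\cdot\|_v$ with theirs.

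Second, I compare heights. In \cite{evfer} the twisted height is normalized with the factors $\|\det(L_{1v}\kdots L_{kv})\|_v^{-1/k}$ at $v\in S'$, or an equivalent normalization. Our $H_{\LL,\cv,Q}$ differs from theirs by a multiplicative factor $\prod_{v\in S'}\|\det(L_{1v}\kdots L_{kv})\|_v^{1/k}$, which is positive and depends only on $\LL$ and $K$; call it $A^{\pm 1}$. Hence $H_{\LL,\cv,Q}(\xv)\leq Q^{-\delta}$ implies that their height is at most $AQ^{-\delta}$. If $Q\geq A^{2/\delta}$, this is at most $Q^{-\delta/2}$. So I apply their theorem with $\delta/2$ in place of $\delta$. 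This only changes the absolute constants in the bound $t\leq C\,\delta^{-3}(\log\delta^{-1})^2$ and in the lower bound for $Q$.

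Third, I read off the conclusion of \cite{evfer}. It gives proper linear subspaces $T_1\kdots T_t$ of $K^k$, with $t$ bounded by a function of $k$ and $|S'|$ times $\delta^{-3}(\log\delta^{-1})^2$. For every $Q$ above a bound of the form $C(\LL)^{1/\delta}$, the set $\{\xv:\text{their height}\leq Q^{-\delta}\}$ lies in one of the $T_i$; their bound on $Q$ involves heights of the $L_{iv}$ and $[K:\Qq]$. Every ingredient depends only on $k$, $K$ and $\LL$. The subspaces themselves are allowed to depend on $\cv$, which is fine since $t$ does not. Taking $C_1$ to be the resulting constant and $C_2$ the maximum of their $C(\LL)$ and $A^2$ gives the lemma.

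The main obstacle is bookkeeping: matching the exact normalizations of absolute values, of the twist, and of the hypotheses in \cite{evfer} to ours. I also need to check that their restriction $0<\delta\leq 1$, and any general position assumption they make on the linear forms, are met. If general position is required there, I would instead use their version stated for arbitrary systems in which each $\{L_{iv}\}_i$ is linearly independent, which is the form the paper cites.
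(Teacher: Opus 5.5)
Your proposal takes the same route as the paper, which gives no argument beyond declaring Lemma \ref{lem:3.1} a consequence of Theorem 2.1 of Evertse and Ferretti; extending $\LL,\cv$ by $X_i$ and $0$ off $S$, and absorbing the determinant normalization by passing to $\delta/2$ for $Q\ge A^{2/\delta}$, is exactly the routine bookkeeping behind that citation. One caveat, which also affects the statement itself: replacing $\delta$ by $\delta/2$ is not just a change of constants in $\delta^{-3}(\log\delta^{-1})^2$ when $\delta$ is near $1$, and in fact no bound tending to $0$ as $\delta\to 1$ can hold because the conclusion forces $t\ge 1$, so what one actually gets is $t\le C_1\delta^{-3}(1+\log\delta^{-1})^2$, which is enough for the paper since $\delta$ is small in all its applications.
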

\noindent
In fact, Theorem 2.1 of \cite{evfer} gives 
a version with fully explicit $C_1$, $C_2$. Moreover, it gives an absolute version for twisted heights defined on the algebraic closure of $\Qq$ instead of just on a number field. For our purposes, we need only the explicit dependence on $\delta$.

Condition \eqref{eq:3.2} is a normalization. For applications, the following corollary, where this normalization has been removed, is more convenient.

\begin{corollary}\label{cor:3.2}
Let $C_1,C_2$ be the constants from Lemma \ref{lem:3.1}.
Let $\lambda ,\mu, \theta$ be reals with $\lambda <\mu <\theta$. 
Further, let $\dv=(d_{iv}:\, v\in S,\, i=1\kdots k)$ be a tuple of reals such that
\begin{equation}\label{eq:3.3}
\medfrac{1}{k}\sum_{v\in S}\sum_{i=1}^k d_{iv}\leq  \lambda ,\ \ \sum_{v\in S}\max (d_{1v}\kdots d_{kv})\leq \theta .
\end{equation}
Put $\delta :=\medfrac{\mu -\lambda}{\theta -\lambda}$. 
Then there are proper linear subspaces $T_1\kdots T_t$ of $K^n$, with 
\begin{equation}\label{eq:3.4}
t\leq C_1\delta^{-3}(\log \delta^{-1})^2
\end{equation}
such that for every real $Q$ with  
\begin{equation}\label{eq:3.5}
Q\geq C_2^{1/(\mu -\lambda )}
\end{equation}
there is $T_i\in\{ T_1\kdots T_t\}$ with $\{ \xv\in K^n:\, H_{\LL ,Q,\dv}(\xv )\leq Q^{-\mu}\}\subset T_i$.
\end{corollary}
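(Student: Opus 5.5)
The plan is to reduce Corollary \ref{cor:3.2} to Lemma \ref{lem:3.1} by normalizing the tuple $\dv$: subtract the average exponent at each place, rescale, and absorb the rescaling into a new parameter $Q'$. We may assume $C_2\geq 1$. Since $\mu-\lambda>0$, condition \eqref{eq:3.5} then gives $Q\geq 1$; and if $C_2=1$ and $Q=1$, the reparametrized $Q'$ below equals $1$, so we in fact need $Q>1$. To guarantee this we may replace $C_2$ by $\max(C_2,2)$ in Lemma \ref{lem:3.1}, which is harmless since the Lemma only becomes weaker.

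First, for $v\in S$ put $e_v:=\frac1k\sum_{i=1}^k d_{iv}$ and $\lambda':=\sum_{v\in S}e_v$. By \eqref{eq:3.3} we have $\lambda'\leq\lambda<\mu<\theta$. Put $c'_{iv}:=d_{iv}-e_v$. Since $Q^{-d_{iv}}=Q^{-e_v}Q^{-c'_{iv}}$, pulling $Q^{-e_v}$ out of each factor at $v\in S$ gives
\[
H_{\LL,Q,\dv}(\xv)=Q^{-\lambda'}H_{\LL,\cv',Q}(\xv).
\]
Moreover $\sum_i c'_{iv}=0$ for each $v$, and
\[
\sum_v\max_i c'_{iv}=\sum_v\max_i d_{iv}-\lambda'\leq \theta-\lambda'.
\]

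Next, put $\cv:=\cv'/(\theta-\lambda')$ and $Q':=Q^{\theta-\lambda'}>1$. Then $Q^{-c'_{iv}}=Q'^{-c_{iv}}$, so $H_{\LL,\cv',Q}=H_{\LL,\cv,Q'}$. The tuple $\cv$ satisfies \eqref{eq:3.2} and depends only on $\dv,\lambda,\theta$, not on $Q$. The condition $H_{\LL,Q,\dv}(\xv)\leq Q^{-\mu}$ becomes
\[
H_{\LL,\cv,Q'}(\xv)\leq Q^{\lambda'-\mu}=Q'^{-\delta'},\qquad \delta':=\frac{\mu-\lambda'}{\theta-\lambda'}.
\]
The function $t\mapsto(\mu-t)/(\theta-t)$ has derivative $(\mu-\theta)/(\theta-t)^2<0$, so $\delta'\geq\delta$. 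Since $Q'>1$, the set in question is contained in $\{\xv:\ H_{\LL,\cv,Q'}(\xv)\leq Q'^{-\delta}\}$. Also $0<\delta<1$ because $\lambda<\mu<\theta$.

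Finally, apply Lemma \ref{lem:3.1} with this $\cv$ and $\delta$. It yields subspaces $T_1\kdots T_t$ with $t$ bounded as in \eqref{eq:3.4}, which cover the relevant set whenever $Q'\geq C_2^{1/\delta}$. This holds because
\[
Q'^{\delta}=Q^{(\theta-\lambda')\delta}\geq Q^{(\theta-\lambda)\delta}=Q^{\mu-\lambda}\geq C_2
\]
by \eqref{eq:3.5}. The main point needing care is the monotonicity $\delta'\geq\delta$ together with $\theta-\lambda'\geq\theta-\lambda$: both are needed so that one fixed $\delta$, and hence one fixed family of subspaces, works uniformly in $Q$, even though $\lambda'$ may be strictly smaller than $\lambda$.
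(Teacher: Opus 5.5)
Your proof is correct and follows essentially the same route as the paper: subtract the per-place averages of the $d_{iv}$, rescale by $\theta$ minus their total, and pass to $Q'=Q^{\theta-\lambda'}$ to land in Lemma~\ref{lem:3.1}. The only difference is that the paper first reduces to the case $\frac{1}{k}\sum_{v\in S}\sum_{i=1}^k d_{iv}=\lambda$ by a monotonicity remark, whereas you keep $\lambda'\leq\lambda$ and absorb the gap directly through $\delta'\geq\delta$ and $\theta-\lambda'\geq\theta-\lambda$ (you also make explicit the requirement $Q'>1$, which the paper leaves implicit).
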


\begin{proof} We reduce to Lemma \ref{lem:3.1}.
We first observe that if we increase $\lambda$, then both the upper bound for $t$ 
in \eqref{eq:3.4}, as well as the lower bound for $Q$ in \eqref{eq:3.5} increase.
Hence we may, and shall, assume that 
\[
\medfrac{1}{k}\sum_{v\in S}\sum_{i=1}^k d_{iv}=  \lambda .
\] 
  
Clearly, $0<\delta <1$.
Define
\begin{align*}
&Q':=Q^{\theta -\lambda},
\\
&c_{iv}:=\frac{1}{\theta -\lambda}\cdot\Big( d_{iv}-\medfrac{1}{k}\sum_{i=1}^k d_{iv}\Big)
\ \ \text{for } v\in S,\, i=1\kdots k.
\end{align*}
Notice that these $c_{iv}$ satisfy \eqref{eq:3.2}. Further,  $H_{\LL ,\cv ,Q'}(\xv )=Q^{\lambda}H_{\LL ,\dv ,Q}(\xv )$.
Hence, $H_{\LL ,\dv ,Q}(\xv )\leq Q^{-\mu}$ is equivalent to 
$H_{\LL ,\cv ,Q'}(\xv )\leq Q'^{-\delta}$. 
Also, the condition $Q\geq C_2^{1/(\mu -\lambda )}$ is equivalent to $Q'\geq C_2^{1/\delta}$. 
Thus, Lemma \ref{lem:3.1} implies Corollary \ref{cor:3.2}.
\end{proof}

Henceforth, $K$ is an algebraic number field, and
$\Gamma$ a finitely generated subgroup of rank $r\geq 1$ of $(K^*)^k$, where $k\geq 2$.
Elements of $\Gamma$ are written as $\xv =(x_1\kdots x_k)$.
We fix a basis $\uv_1=(u_{11}\kdots u_{k1})\kdots \uv_r=(u_{r1}\kdots u_{rk})$ of $\Gamma/\Gamma_{{\rm tors}}$. 
Let $S$ be given by \eqref{eq:3.6}.

Corollary \ref{cor:3.2} (applied with $Q=H(\xv )$ for $\xv\in\Gamma$) is the main tool in the proof of Theorem \ref{thm:2.1}. It will be used in combination with the following covering lemma, which is Lemma 6.3.6 of \cite{egybook}.

\begin{lemma}\label{lem:3.3}
Let $\eta >0$. There is a subset $\EE$ of $\Rr^{k|S|}$ of cardinality 
\[
|\EE |\leq (1+4k/\eta )^r
\]
such that for every $\xv\in\Gamma$ there is $\ev =(e_{iv}:\, v\in S,\, i=1\kdots k)\in\EE$ with 
\begin{equation}\label{eq:3.7}
\sum_{v\in S}\sum_{i=1}^k\left| e_{iv}- \frac{\log \| x_i\|_v}{h(\xv )}\right| \leq \eta .
\end{equation}
\end{lemma}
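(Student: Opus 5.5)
The statement to prove is Lemma \ref{lem:3.3}, a covering lemma. The plan is to pass to the logarithmic embedding of $\Gamma$ and cover a compact set by a net. Consider the linear map
\[
\varphi :\Rr^r\to\Rr^{k|S|},\quad \mathbf{\xi}\mapsto (\ell_{vi}(\mathbf{\xi}) :\, v\in S,\, i=1\kdots k).
\]
Writing $\xv=\mathbf{\zeta}\cdot\uv_1^{z_1}\cdots\uv_r^{z_r}$, we have $\log\|x_i\|_v=\ell_{vi}(\mathbf{z})$, since roots of unity have absolute value $1$. By \eqref{eq:2.4}, $h(\xv)=F(\mathbf{z})$, where
\[
F(\mathbf{\xi}):=\sum_{v\in S}\max\big(0,\ell_{v1}(\mathbf{\xi})\kdots\ell_{vk}(\mathbf{\xi})\big).
\]
The vector $\big(\log\|x_i\|_v/h(\xv)\big)$ is then $\varphi(\mathbf{z}/F(\mathbf{z}))$, because $F$ is positively homogeneous of degree $1$. (Torsion elements have $h(\xv)=0$ and must be excluded or treated by convention.) So it suffices to cover the set $\varphi(\partial\CC(\Gamma))$, or even $\varphi(\CC(\Gamma))$, by few $\ell^1$-balls of radius $\eta$ in $\Rr^{k|S|}$.

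Work in the image space $V:=\varphi(\Rr^r)$, which has dimension at most $r$, with the $\ell^1$-norm $\|\cdot\|_1$ restricted to $V$. The key estimate is that every $\mathbf{y}=\varphi(\mathbf{\xi})$ with $F(\mathbf{\xi})\le 1$ satisfies $\|\mathbf{y}\|_1\le 2k$. Indeed, the product formula gives $\sum_{v}\ell_{vi}(\mathbf{\xi})=0$ for each $i$. Hence
\[
\sum_v|\ell_{vi}(\mathbf{\xi})| =2\sum_v\max\big(0,\ell_{vi}(\mathbf{\xi})\big)\le 2F(\mathbf{\xi})\le 2.
\]
Summing over $i$ yields $\|\mathbf{y}\|_1\le 2k$. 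So the set to be covered lies in the ball $B_V(0,2k)$ of the normed space $(V,\|\cdot\|_1)$, of dimension at most $r$.

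Now apply the standard volumetric packing argument. Take a maximal $\eta$-separated subset $\EE$ of $B_V(0,2k)$. The balls $B_V(\mathbf{e},\eta/2)$ with $\mathbf{e}\in\EE$ are disjoint and contained in $B_V(0,2k+\eta/2)$. Comparing volumes in $V$ with respect to any Lebesgue measure on $V$ gives
\[
|\EE|\le\Big(\frac{2k+\eta/2}{\eta/2}\Big)^{r}=(1+4k/\eta)^r.
\]
By maximality, every point of the ball lies within $\ell^1$-distance $\eta$ of some $\mathbf{e}\in\EE$, which is exactly \eqref{eq:3.7}.

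The main thing to check carefully is the norm bound $2k$ for the constant to match exactly. The only genuinely essential input is the product formula identity $\sum_v\ell_{vi}=0$, which is what ties the $\ell^1$ size to the height. If $\dim V<r$, the bound only improves.
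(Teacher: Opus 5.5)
Your proof is correct. The log-embedding $\mathbf{x}\mapsto\varphi(\mathbf{z})$ lands in a space of dimension at most $r$, and the product formula gives the bound $\sum_{v,i}|\ell_{vi}(\xi)|\le 2k$ on $\CC(\Gamma)$. The volumetric packing count $\bigl(\frac{2k+\eta/2}{\eta/2}\bigr)^r=(1+4k/\eta)^r$ then reproduces the stated constant exactly, and torsion elements, where $h(\xv)=0$, are rightly set aside.

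The paper gives no proof of its own; it quotes the result as \cite[Lemma 6.3.6]{egybook}. The constant $(1+4k/\eta)^r$ is the signature of exactly this covering of an $\ell^1$-ball of radius $2k$ by balls of radius $\eta$, so your route is very likely the cited one.
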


The following lemma compares the height $H(a_1x_1+\cdots +a_kx_k)$ with $H(\xv )$, for $\xv=(x_1\kdots x_k)\in\Gamma$.

\begin{lemma}\label{lem:3.4}
Let $\Gamma$ be a finitely generated subgroup of $(K^*)^k$ of rank $r\geq 1$ and
let $\av =(a_1\kdots a_k)\in (K^*)^k$, $\varepsilon >0$. There is a constant
$C(\Gamma ,\av ,\varepsilon )>0$, depending only on $\Gamma ,\av ,\varepsilon$, such that 
\begin{equation}\label{eq:3.8}
H(\xv )\leq C(\Gamma ,\av ,\varepsilon )\cdot H(a_1x_1+\cdots +a_kx_k)^{1+\varepsilon}
\end{equation}
for all $\xv\in\Gamma$ with $\sum_{i\in I} a_ix_i\not= 0$ for each non-empty subset $I$  of $\{ 1\kdots k\}$.
\end{lemma}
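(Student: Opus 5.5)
The plan is to derive Lemma \ref{lem:3.4} from the $p$-adic Subspace Theorem, arguing by induction on $k$ combined with a contradiction argument. This is essentially the result of \cite{evertse} applied to $S$-unit vectors. Enlarge $S$ from \eqref{eq:3.6} so that it also contains the places where some $a_i$ is not a unit. Then every coordinate of $\av\cdot\xv$ is an $S$-unit for all $\xv\in\Gamma$. Since $H(\av\cdot\xv)\ll H(\xv)$ and vice versa, with constants depending only on $\av$, we may replace $\xv$ by $\yv:=\av\cdot\xv$. The lemma then says: for $S$-unit vectors $\yv$ with no vanishing subsum, $H(\yv)\ll_\varepsilon H(y_1+\cdots+y_k)^{1+\varepsilon}$.

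Write $\beta:=y_1+\cdots+y_k$ and suppose the inequality fails for an infinite sequence of such $\yv$. For $k=1$ the claim is trivial. For $k\geq 2$, consider the vector $(y_1,\ldots,y_k)$ and, for each $v\in S$, the linear forms consisting of $X_1+\cdots+X_k$ together with $k-1$ of the coordinates $X_i$. For each $v$, drop the coordinate $X_{i_v}$ with $\|y_{i_v}\|_v$ maximal. There are finitely many choices of the $i_v$, so we may pass to a subsequence where they are fixed. Then
\[
\prod_{v\in S}\frac{\|y_1+\cdots+y_k\|_v\prod_{i\neq i_v}\|y_i\|_v}{\|\yv\|_v^{k}}.
\]
The product formula for the $S$-units $y_i$, together with $\|\beta\|_v\le \max(1,\|\beta\|_v)$ and the fact that $\prod_{v\in S}\max(1,\|\beta\|_v)^{\pm}$ is controlled by $H(\beta)$, bounds this double product by roughly $H(\beta)\,H(\yv)^{-k}\cdot H(\yv)^{\,\cdots}$. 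In the precise bookkeeping, we homogenize with $\|\yv\|_v$ and use that the $y_i$ are $S$-units. The point is that the failure $H(\beta)<H(\yv)^{1/(1+\varepsilon)}$ makes the product at most $H(\yv)^{-\delta}$ with $\delta=\delta(\varepsilon)>0$.

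By the $p$-adic Subspace Theorem (Schmidt--Schlickewei), the infinitely many $\yv$ then lie in finitely many proper subspaces. Alternatively, one can use Corollary \ref{cor:3.2} with $Q=H(\yv)$, combined with Lemma \ref{lem:3.3} to fix the direction of $(\log\|y_i\|_v)/h(\yv)$ up to $\eta$. So infinitely many of them satisfy a fixed nontrivial relation $\sum c_iy_i=0$.

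Using this relation, we reduce the number of terms. Eliminating one $y_j$ expresses $\beta$ as a sum over fewer coordinates, $\beta=\sum_{i\neq j}(1-c_i/c_j)y_i$. We group terms whose ratios lie in a finite set; this is the standard degenerate-case analysis, and \eqref{eq:2.7}-type conditions are not needed because we only need an upper bound for $H(\yv)$. Then we apply the induction hypothesis to the shorter sum, and apply the non-vanishing-subsum version to the relation itself to control the ratios $y_i/y_j$ via the $S$-unit equation theorem (finitely many non-degenerate solutions up to scaling). Together these bound $H(\yv)$ by $H(\beta)^{1+\varepsilon}$ along the subsequence, which is a contradiction.

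The main obstacle is this degenerate step. After the relation is imposed, the new sum may have vanishing subsums, or coefficients that become zero. Handling it requires splitting the relation into minimal vanishing subsums and using that, on each, the ratios $y_i/y_j$ take finitely many values. The constant is ineffective, as the paper indicates.
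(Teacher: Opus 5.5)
\textbf{Route.} You are re-deriving the theorem that the paper simply quotes. The paper's proof takes Evertse's result \cite[Theorem 6.1.1]{egybook}, i.e.\ the projective inequality \eqref{eq:3.9}, as a black box. It applies it to $(n_0,a_1x_1,\ldots,a_kx_k,-n_0-\beta)$, where $n_0$ is a positive integer that is a unit at the finite places of $T$. This $n_0$ is chosen among $2^k$ candidates so that no $n_0+\sum_{i\in I}a_ix_i$ vanishes, and it is what makes the left-hand side of \eqref{eq:3.9} dominate the \emph{affine} height $H(\xv)$. Proving that theorem from the Subspace Theorem and the unit equation by induction on $k$ is legitimate. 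However, the step that carries your whole proof is left as ``roughly $H(\beta)H(\yv)^{-k}\cdot H(\yv)^{\cdots}$'', and as written it does not go through.

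\textbf{The gap.} With your forms one gets exactly
\[
\prod_{v\in S}\frac{\|\beta\|_v\prod_{i\neq i_v}\|y_i\|_v}{\|\yv\|_v^{k}}
=\Big(\prod_{v\in S}\|\beta\|_v\Big)\, H^*(\yv)^{-k-1},\qquad H^*(\yv):=\prod_{v\in S}\|\yv\|_v .
\]
For this homogeneous product the Subspace Theorem needs the bound $H^*(\yv)^{-k-\delta}$, not $H(\yv)^{-\delta}$. In other words, you need $\prod_{v\in S}\|\beta\|_v\le H^*(\yv)^{1-\delta}$. Both sides of this inequality are unchanged under $\yv\mapsto u\yv$ with $u\in U_S$. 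By contrast, \eqref{eq:3.8} involves the affine heights $H(\yv)\ge H^*(\yv)$ and $H(\beta)$, which are not scale-invariant. So the failure $H(\beta)<H(\yv)^{1/(1+\varepsilon)}$ does not by itself yield the Subspace inequality; this affine/projective passage is exactly what $n_0$ handles in the paper. Your alternative via Corollary \ref{cor:3.2} with $Q=H(\yv)$ would cope with affine heights, as in Section \ref{sec4}. But it would first require extracting from the failure a place $w$ with $\|\beta\|_w\le\|\yv\|_wH(\yv)^{-\varepsilon'}$, as in the derivation of \eqref{eq:4.9c}, and you do not do that either.

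\textbf{How to close it.} Within your approach the bridge is a placewise comparison. Since $\|\beta\|_v\le k^{2/d}\|\yv\|_v$ for every $v$ and $\beta$ is an $S$-integer,
\[
\frac{H(\yv)}{H(\beta)}=\prod_{v\in S}\frac{\max(1,\|\yv\|_v)}{\max(1,\|\beta\|_v)}\le\prod_{v\in S}\max\Big(1,\frac{\|\yv\|_v}{\|\beta\|_v}\Big)\le k^{2|S|/d}\,\frac{H^*(\yv)}{\prod_{v\in S}\|\beta\|_v},
\]
and moreover $1\le\prod_{v\in S}\|\beta\|_v\le H(\beta)$. Hence $H(\yv)>C\,H(\beta)^{1+\varepsilon}$ with $C$ large forces $\prod_{v\in S}\|\beta\|_v\le H^*(\yv)^{1/(1+\varepsilon)}$. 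The Subspace Theorem then applies with $\delta=\varepsilon/(1+\varepsilon)$.

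\textbf{Degenerate step.} Discard the elimination $\beta=\sum_{i\neq j}(1-c_i/c_j)y_i$, since coefficients and subsums may vanish there. Use only your fallback: a minimal vanishing subsum of $\sum c_iy_i=0$ gives, via Lemma \ref{lem:3.5}, $y_i=\lambda y_j$ with $\lambda$ in a finite set. Then $\beta=\sum_{l\neq i,j}y_l+(1+\lambda)y_j$ with $1+\lambda\neq 0$, and every subsum of this is a subsum of the original. After adding to $S$ the finitely many places where some $1+\lambda$ is not a unit, the induction hypothesis applies to the shortened vector $\yv'$, and $H(\yv)\ll H(\yv')$. This is the same manoeuvre as in the proof of Lemma \ref{lem:3.9}.

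With these repairs your outline becomes a complete, self-contained proof. The paper's proof is much shorter because it quotes Evertse's theorem and needs only the $n_0$ device.
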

\noindent
\textbf{Remark.} The constant $C(\Gamma ,\av ,\varepsilon )$ is not effectively computable from the method of proof. This lemma is used in the proofs of all theorems, 
except Theorem \ref{thm:2.2}. Consequently, 
the implied constants in the asymptotic formulas in all theorems except Theorem \ref{thm:2.2} are not effectively computable.

\begin{proof}
We apply \cite[Theorem 6.1.1]{egybook} (which is equivalent to \cite[Theorem 1]{evertse}),
which can be stated as follows. Let $T$ be a finite set of places of $K$, containing all infinite places. Let $y_0\kdots y_m$ be $T$-integers, i.e., $\| y_i\|_v\leq 1$ for $v\in M_K\setminus T$, $i=0\kdots m$ and suppose that $y_0+\cdots +y_m=0$ and that $\sum_{i\in I} y_i\not= 0$ for each proper, non-empty subset $I$ of $\{ 0\kdots m\}$.
Let $\varepsilon >0$. Then
\begin{equation}\label{eq:3.9}
\prod_{v\in T}\max (\| y_0\|_v\kdots \| y_m\|_v)\ll \Big( \prod_{v\in T}\prod_{i=0}^m\| y_i\|_v\Big)^{1+\varepsilon},
\end{equation} 
where the constant implied by the Vinogradov symbol $\ll$ depends  on $m,K,T$ and $\varepsilon$.

Now choose for $T$ the set of places obtained by adding to $S$ all infinite places of $K$ and all places $v$ of $K$ such that $\| a_i\|_v\not= 1$ for at least one $i$.
Then $k,K,T$ are determined by $\Gamma$ and $\av$.
Choose a set $\NN$ of cardinality $2^k$, consisting of positive rational integers $n$ such that $\| n\|_v=1$ for all finite places $v\in T$. We can choose $\NN$ with an upper bound 
depending only on $k$, $K$, $T$, hence only on $\Gamma$, $\av$. Let $\xv\in\Gamma$ be such that $\sum_{i\in I} a_ix_i\not= 0$ for each non-empty  $I\subseteq \{ 1\kdots k\}$. There is $n_0\in\NN$ such that $n_0+\sum_{i\in I} a_ix_i\not= 0$ for each non-empty  $I\subseteq \{ 1\kdots k\}$.
Let $m=k+1$, $y_0=n_0$, $y_i=a_ix_i$ for $i=1\kdots k$, $y_{k+1}=-n_0-\sum_{i=1}^k y_i$.
Then all conditions of the above stated \cite[Theorem 6.1.1]{egybook} are satisfied, so we can apply \eqref{eq:3.9}.
Since $\max_i\| y_i\|_v\geq 1$ for all $v\in T$, and equal to $1$ for $v\in M_K\setminus T$, the left-hand side of \eqref{eq:3.9} is $\gg H(\xv )$, while the right-hand side of \eqref{eq:3.9} is
\[
\ll \Big(\prod_{v\in T} \| n_0+a_1x_1+\cdots +a_kx_k\|_v\Big)^{1+\varepsilon}\ll H(a_1x_1+\cdots +a_kx_k)^{1+\varepsilon},
\]
where the implied constants depend on $\Gamma ,\av$, and $\varepsilon$.
This implies Lemma \ref{lem:3.4}.
\end{proof}

\begin{lemma}\label{lem:3.5}
Let $\Gamma$ be a finitely generated subgroup of $(K^*)^k$ of rank $r\geq 1$  and 
$\av =(a_1\kdots a_k)\in (K^*)^k$. Then the equation
\[
a_1x_1+\cdots +a_kx_k=1\ \ \text{in } \xv\in\Gamma
\]
has at most $c(k,r)$ solutions, with $\sum_{i\in I} a_ix_i\not= 0$ for each proper, non-empty subset of $\{ 1\kdots k\}$, where $c(k,r)$ depends on $k$ and $r$ only.
\end{lemma}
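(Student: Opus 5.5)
This is a direct consequence of the Evertse--Schlickewei--Schmidt theorem on linear equations in multiplicative groups of finite rank. The plan is to reduce to that theorem, which we quote without reproving it. Its content is as follows. Let $G$ be a subgroup of $(\overline{\Qq}^*)^k$ of finite rank $r$, and let $\av\in(\overline{\Qq}^*)^k$. Then the non-degenerate solutions $\yv\in G$ of
\[
a_1y_1+\cdots+a_ky_k=1
\]
number at most $\exp\big((6k)^{3k}(r+1)\big)$. Here a solution is non-degenerate when no proper non-empty subsum $\sum_{i\in I}a_iy_i$ vanishes. Amoroso and Viada later sharpened this bound to $\exp\big((6k)^{4k}(r+1)\big)$-type estimates. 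In all versions the bound is uniform in $\av$, in the field, and in $G$, and depends only on $k$ and $r$.

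The steps, in order, are these.

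\emph{Step 1.} View $\Gamma\subset (K^*)^k\subset(\overline{\Qq}^*)^k$. Since $\Gamma$ is finitely generated of rank $r$, it has finite rank $r$ in the sense of the quoted theorem: its division group has dimension $r$ over $\Qq$.

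\emph{Step 2.} Check that the non-degeneracy condition in the statement is exactly the condition used in the quoted theorem. That condition is $\sum_{i\in I}a_ix_i\ne0$ for every proper non-empty $I$.

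\emph{Step 3.} Apply the theorem with $G=\Gamma$ and $\yv=\xv$. This gives at most $c(k,r):=\exp\big((6k)^{3k}(r+1)\big)$ solutions. This bound is independent of $\av$, of $K$, and of the torsion part of $\Gamma$.

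If one wished to avoid quoting the uniform bound in full, a fallback is available. One could obtain a bound depending on $k$ and $|S|$ from the quantitative Subspace Theorem, as in Lemma~\ref{lem:3.1}, using induction on $k$ over the proper subspaces. This, however, would not yield dependence on $r$ alone, and removing the dependence on $S$ and $K$ is precisely the hard part of the Evertse--Schlickewei--Schmidt argument. So the only real step is correctly invoking the uniform theorem; everything else is bookkeeping.
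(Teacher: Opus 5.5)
Your proposal is correct and matches the paper, which proves this lemma simply by quoting the Evertse--Schlickewei--Schmidt theorem; that theorem's bound $\exp\big((6k)^{3k}(r+1)\big)$ is uniform in $\av$, $K$ and $\Gamma$. One correction to a side remark, which does not affect your argument: Amoroso and Viada's improvement is $c(k,r)=(8k)^{4k^4(k+r+1)}$. It is not an $\exp\big((6k)^{4k}(r+1)\big)$-type estimate, which would in fact be weaker than the original bound.
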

\noindent
This is the main result of \cite{evss}, with $c(k,r)$ exponential in $r$ and doubly exponential in $k$. Amoroso and Viada \cite{amvi} improved this to $c(k,r)=(8k)^{4k^4(k+r+1)}$.\qed 
\\[0.15cm]

We also need a quantitative result for linear recurrence sequences.

\begin{lemma}\label{lem:3.7}
Let $a_1\kdots a_k$ be non-zero polynomials in $K[X]$, and $\alpha_1\kdots \alpha_k\in K^*$.
Suppose that $\alpha_i/\alpha_j$ is not a root of unity for each pair $i\not= j\in\{ 1\kdots k\}$. 
Let $u_n:=a_1(n)\alpha_1^n+\cdots +a_k(n)\alpha_k^n$ for $n\in\Zz_{\geq 0}$.
Then the number of $n\in\Zz_{\geq 0}$ such that $u_n=0$ is at most $
c'(\ell )$, the latter depending only on
$\ell := k+\deg a_1 +\cdots +\deg a_k$.
\end{lemma}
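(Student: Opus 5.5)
The plan is not to prove this from scratch. It is a deep uniformity statement, and I would deduce it from the literature: it is exactly W.~M.~Schmidt's theorem on the zero multiplicity of linear recurrence sequences (Acta Math.\ 1999). There it is shown that a non-degenerate linear recurrence of order $\ell$ (here the order is $\sum_i(\deg a_i+1)\le \ell$) has at most $c'(\ell)$ zeros, with $c'(\ell)$ triply exponential in $\ell$. Allen and Amoroso--Viada later improved this to a doubly exponential bound. The proof would therefore consist of checking that our hypotheses match: $u_n$ satisfies a linear recurrence whose characteristic roots are $\alpha_1\kdots\alpha_k$ with multiplicities $\deg a_i+1$, and assumption \eqref{eq:2.7a} is precisely non-degeneracy.

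To make the structure transparent, I would first explain the special case where all $a_i$ are constants, since it follows from Lemma \ref{lem:3.5} by induction on $k$. Take $\Gamma$ to be the cyclic group generated by $(\alpha_1\kdots\alpha_k)$, so $\rank\Gamma\le 1$. The case $k=1$ has no zeros. For $k\ge 2$, a zero $n$ of $u_n$ determines a minimal partition of $\{1\kdots k\}$ into sets $I$ with $\sum_{i\in I}a_i\alpha_i^n=0$ and no vanishing proper subsums. Each such $I$ has $|I|\ge 2$ because the $a_i$ are non-zero. For a fixed partition, choose one block $I$ and an index $j\in I$, and divide by $-a_j\alpha_j^n$. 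This gives an equation $\sum_{i\in I\setminus\{j\}}(-a_i/a_j)(\alpha_i/\alpha_j)^n=1$ with no vanishing proper subsums. By Lemma \ref{lem:3.5}, the tuple $((\alpha_i/\alpha_j)^n)_i$ takes at most $c(k,1)$ values. Since $\alpha_i/\alpha_j$ is not a root of unity, each value determines $n$. Summing over the at most $k^k$ partitions gives a bound depending only on $k$.

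For general polynomial coefficients this reduction breaks down, since the terms $a_i(n)\alpha_i^n$ no longer lie in a finitely generated group. The natural idea is to treat $n^m\alpha_i^n$ via a parametric or function-field subspace argument, which is essentially Schmidt's method. The main obstacle is making the bound uniform, i.e.\ independent of $K$, of the $\alpha_i$, and of the coefficients. The Skolem--Mahler--Lech $p$-adic argument gives finiteness only with a dependence on the data, so the uniform bound really requires Schmidt's quantitative Subspace machinery. I would therefore invoke his theorem (in the Amoroso--Viada form) directly rather than reprove it.
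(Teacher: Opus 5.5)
Your proposal matches the paper: the paper likewise does not prove this lemma but cites Schmidt's theorem on zeros of non-degenerate linear recurrences of order $k+\sum_i\deg a_i=\ell$, together with the Amoroso--Viada improvement $c'(\ell)=\exp\exp(70\ell)$. Your additional derivation of the constant-coefficient case from Lemma~\ref{lem:3.5}, via a minimal vanishing subsum and the fact that $\alpha_i/\alpha_j$ is not a root of unity, is correct but not needed.
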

\noindent
This was proved by Schmidt \cite{schmidt1999} with $c'(\ell )$
 triply exponential in $\ell$. Amoroso and Viada
\cite{amvi2011} improved this to
$c'(\ell )=\exp\exp (70\ell )$.\qed
\\[0.15cm]

We now prove some asymptotic formulas. Define the set 
\[
\HH_{\Gamma} (X):=\{ \xv\in\Gamma :\, H(\xv )\leq X\}.
\] 
Let $c(\Gamma )$ be the quantity defined by \eqref{eq:2.6}.
In the results stated below, the implied constants depend on $\Gamma$ (in fact, a priori they depend also on the choice of a basis $\{ \uv_1\kdots \uv_r\}$ of $\Gamma/\Gamma_{{\rm tors}}$, but we can choose such a basis depending on $\Gamma$, e.g., we can choose a total ordering on the set of tuples $(\uv_1\kdots \uv_k)\in (K^*)^k$, and choose the smallest basis of $\Gamma/\Gamma_{{\rm tors}}$ in this total ordering).

\begin{lemma}\label{lem:3.8}
Let $\Gamma$ be a finitely generated subgroup of $(K^*)^k$ of rank $r\geq 1$. Then
\begin{equation}\label{eq:3.10}
|\HH_{\Gamma}(X)|=c(\Gamma )(\log X)^r +O((\log X)^{r-1})\ \ \text{as } X\to\infty ,
\end{equation}
where the implied constant depends on $\Gamma$.
\end{lemma}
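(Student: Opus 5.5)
The plan is a lattice-point count in a dilated convex body. By \eqref{eq:2.3a}, every $\xv\in\Gamma$ is written uniquely as $\zeta\cdot\uv_1^{z_1}\cdots\uv_r^{z_r}$ with $\zeta\in\Gamma_{{\rm tors}}$ and $\zv=(z_1\kdots z_r)\in\Zz^r$. Since $\|\zeta_j\|_v=1$ for all $v$, \eqref{eq:2.4} shows that $h(\xv)$ depends only on $\zv$; write $h(\xv)=F(\zv)$ with
\[
F(\xi):=\sum_{v\in S}\max\big(0,\ell_{v1}(\xi)\kdots\ell_{vk}(\xi)\big).
\]
Thus $|\HH_\Gamma(X)|=|\Gamma_{{\rm tors}}|\cdot|\{\zv\in\Zz^r:\,F(\zv)\leq\log X\}|$. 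The function $F$ is convex, being a sum of maxima of linear forms, and positively homogeneous of degree $1$. Hence, with $T:=\log X$, the set $\{F\leq T\}$ is the dilate $T\cdot\CC(\Gamma)$ of the convex set $\CC(\Gamma)$ defined in \eqref{eq:2.5}. It then suffices to show
\[
|\Zz^r\cap T\,\CC(\Gamma)|=\mu^r(\CC(\Gamma))\,T^r+O(T^{r-1}),
\]
which, after multiplying by $|\Gamma_{{\rm tors}}|$, gives \eqref{eq:3.10} by \eqref{eq:2.6}.

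Two properties of $\CC(\Gamma)$ are needed. First, it is bounded, as the paper notes. I would argue this as follows. If $F(\xi)=0$, then every $\ell_{vj}(\xi)\le 0$. Since $\sum_{v\in S}\ell_{vj}(\xi)=0$ by the product formula, all $\ell_{vj}(\xi)=0$. By the choice of $\uv_1\kdots\uv_r$ as a basis of $\Gamma/\Gamma_{{\rm tors}}$, together with Kronecker's theorem (elements with all $\|\cdot\|_v=1$ are roots of unity), the map $\xi\mapsto(\ell_{vj}(\xi))_{v,j}$ is injective. So $F(\xi)=0$ forces $\xi=0$. Because $F$ is continuous and homogeneous, $F(\xi)\ge c|\xi|$ for some $c>0$. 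Second, $\CC(\Gamma)$ is convex, compact, and contains $0$ in its interior, since $F(\xi)\le C|\xi|$.

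The main step is the standard lattice-point estimate. For a compact convex body $B\subset\Rr^r$, the number of points of $\Zz^r\cap TB$ equals $\mu^r(B)T^r+O(T^{r-1})$ for $T\ge1$, with the implied constant depending on $B$. I would prove this directly. Let $\Delta$ be the diameter of the unit cube. Each unit cube centred at a lattice point of $TB$ lies in the outer parallel body $TB+\Delta\mathbf{B}$, where $\mathbf{B}$ is the unit ball. Conversely, every such cube meeting the inner set $\{x:\,x+\Delta\mathbf{B}\subset TB\}$ is centred at a point of $TB$. Because $0$ is an interior point, say $\rho\mathbf{B}\subset B$, both parallel bodies are sandwiched between $(T\pm\Delta/\rho)B$. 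Their volumes are therefore $\mu^r(B)(T\pm\Delta/\rho)^r=\mu^r(B)T^r+O(T^{r-1})$.

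Applying this with $B=\CC(\Gamma)$ and $T=\log X$ finishes the proof. The only point needing care is the nondegeneracy argument ($F(\xi)=0\Rightarrow\xi=0$) that gives boundedness and an interior ball. The rest is routine.
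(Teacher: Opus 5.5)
Your proof is the paper's argument. You pass via $\varphi$ to counting $\Zz^r\cap(\log X)\CC(\Gamma)$ and multiply by $|\Gamma_{\rm tors}|$. You then compare the union of unit cubes centred at these lattice points with the dilates $(\log X\pm c)\CC(\Gamma)$. Your inner/outer parallel-body argument with an inscribed ball $\rho\mathbf{B}\subset\CC(\Gamma)$ is a correct way of producing the constant $c$, whose existence the paper simply asserts.

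The one step that does not follow as written is the non-degeneracy claim you flagged yourself. Kronecker's theorem only shows that $\zv\mapsto(\ell_{vj}(\zv))_{v,j}$ has trivial kernel on $\Zz^r$, equivalently on $\Qq^r$. A linear map with real coefficients can have this property and still have a non-trivial kernel on $\Rr^r$, e.g.\ $(\xi_1,\xi_2)\mapsto\xi_1+\sqrt{2}\,\xi_2$. So you cannot conclude $F(\xi)=0\Rightarrow\xi=0$ for real $\xi$ from Kronecker alone.

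What you need in addition is that the image of $\Gamma$ (equivalently of $\Zz^r$) in $\Rr^{k|S|}$ is discrete. Then $\Zz$-linearly independent image vectors are $\Rr$-linearly independent. Discreteness follows from Northcott's theorem. If $\xv\in\Gamma$ satisfies $|\log\|x_j\|_v|\leq 1$ for all $v\in S$ and all $j$, then each $x_j\in K$ has $H(x_j)\leq e^{|S|}$, so there are only finitely many such $\xv$. Alternatively, use Dirichlet's $S$-unit theorem: after enlarging $S$ by the infinite places if necessary, $\Gamma\subseteq U_S^k$, and $u\mapsto(\log\|u\|_v)_{v\in S}$ maps $U_S$ onto a lattice.

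With this added, injectivity on $\Rr^r$, boundedness of $\CC(\Gamma)$ and the interior ball all hold, and your proof is complete. The paper itself only remarks that boundedness of $\CC(\Gamma)$ is "easily deduced" from the product formula, so you are filling in a point it leaves implicit.
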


\begin{proof} 
We can represent elements of $\Gamma$ in the form \eqref{eq:2.3a}.
Define the map
\begin{equation}\label{eq:3.13}
\varphi :\, \Gamma\to\Zz^r :\, \mathbf{\zeta}\uv_1^{z_1}\cdots \uv_r^{z_r} \mapsto \zv =(z_1\kdots z_r).
\end{equation}
Notice that $\varphi$ is a surjective homomorphism, with kernel $\Gamma_{{\rm tors}}$.  
By \eqref{eq:2.4} we have
\begin{align*}
\varphi (\HH_{\Gamma} (X)) &=
\Big\{ \zv\in\Zz^r:\, 
\sum_{v\in S}\max \big(0,\ell_{v1}(\textbf{z})\kdots \ell_{vk}(\textbf{z})\big)\leq\log X\Big\}
\\
&= (\log X)\cdot\CC (\Gamma )\cap\Zz^r ,
\end{align*}
where $\CC (\Gamma )$ is the set defined by \eqref{eq:2.5}. 
To each $\zv\in\Zz^r$ we associate a cube of measure $1$,
\begin{equation}\label{eq:3.11}
\KK_{\zv}:=\{ \mathbf{\xi }:=(\xi_1\kdots \xi_r)\in\Rr^r:\, -\half <  \xi_i-z_i\leq \half\ \text{for } i=1\kdots r\}.
\end{equation}
Notice that these cubes cover $\Rr^r$ and are pairwise disjoint.
There is a constant $c>0$, depending only on $\Gamma$, such that
\[
(\log X -c)\CC (\Gamma )\subseteq\bigcup_{\zv\in (\log X)\CC (\Gamma )\cap \Zz^r} \KK_{\zv}\subseteq (\log X +c)\CC (\Gamma ).
\]
By comparing measures, we see that
\[
(\log X-c)^r\mu^r(\CC (\Gamma ))\leq |(\log X)\CC (\Gamma )\cap\Zz^r|
\leq (\log X+c)^r\mu^r(\CC (\Gamma )).
\]
Recalling that the map $\varphi$ defined above is $|\Gamma_{{\rm tors}}|$ to $1$,
we obtain
\[
(\log X-c)^rc(\Gamma )\leq |\HH_{\Gamma}(X)|\leq (\log X+c)^rc(\Gamma ).
\]
This implies Lemma \ref{lem:3.8}.
\end{proof}

\begin{lemma}\label{lem:3.9}
Let $\Gamma$ be a finitely generated subgroup of $(K^*)^k$ of rank $r\geq 1$, 
and assume that $\Gamma$ satisfies \eqref{eq:2.7}. 
Let $T$ be a linear subspace of $K^n$. Then
\[
|\HH_{\Gamma}(X)\cap T|\ll (\log X)^{r-1}\ \ \text{as } X\to\infty ,
\]
where the implied constant is independent of $T$ and depends only on $\Gamma$.
\end{lemma}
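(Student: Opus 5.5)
We have to show that the elements of $\Gamma$ of height at most $X$ lying in a fixed linear subspace $T$ number $\ll (\log X)^{r-1}$, uniformly in $T$. The plan is to reduce to the case where $T$ is a hyperplane, then show that $\Gamma\cap T$ is a finite union of cosets of subgroups of rank at most $r-1$, with the number of cosets bounded independently of $T$. Since $\Gamma\cap T$ only gets smaller when $T$ shrinks, we may assume $T$ is a hyperplane, say $T=\{\xv:\, b_1x_1+\cdots+b_kx_k=0\}$ with $\bv\neq 0$.

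Group the elements $\xv\in\Gamma\cap T$ according to the partition of the support $\{i:\, b_i\neq 0\}$ into minimal vanishing subsums. That is, we take a partition $\PP$ into blocks $P$ such that $\sum_{i\in P}b_ix_i=0$ for each block and no proper non-empty subsum of any block vanishes. There are at most finitely many partitions, the number depending only on $k$. Fix one, and pick a block $P$ with $|P|\geq 2$; such a block exists because every block must have at least two elements (a one-element block would force $b_ix_i=0$). Choose $j\in P$ and divide by $b_jx_j$, giving
\[
\sum_{i\in P\setminus\{j\}}(-b_i/b_j)(x_i/x_j)=1 .
\]
The tuples $(x_i/x_j)_{i\in P\setminus\{j\}}$ lie in the image of $\Gamma$ under a homomorphism into $(K^*)^{|P|-1}$, a group of rank at most $r$. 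By Lemma \ref{lem:3.5}, which applies because no proper subsum of the block vanishes, the number of possible tuples is at most $c(k,r)$. Consequently $\xv$ lies in one of at most $c(k,r)$ cosets of the subgroup
\[
\Gamma_{i,j}:=\{\yv\in\Gamma:\, y_i=y_j\}
\]
for some fixed $i\neq j\in P$. By \eqref{eq:2.7}, $\Gamma_{i,j}$ has rank at most $r-1$: some $\xv\in\Gamma$ has $x_i/x_j$ of infinite order, so the homomorphism $\xv\mapsto x_i/x_j$ has infinite image.

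Summing over partitions and pairs $(i,j)$, $\HH_\Gamma(X)\cap T$ is covered by at most $C(k,r)$ sets of the form $\HH_\Gamma(X)\cap \yv\Gamma'$, where $\Gamma'$ is one of the finitely many subgroups $\Gamma_{i,j}$ of rank at most $r-1$, determined by $\Gamma$ alone. The remaining point, which I expect to be the main obstacle, is to bound a translate uniformly in the translating element: we need $|\HH_\Gamma(X)\cap\yv\Gamma'|\ll(\log X)^{r-1}$ with an implied constant independent of $\yv$.

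To handle this, fix one element $\yv_0$ of the coset with $H(\yv_0)\leq X$; if there is none, the set is empty. Any other element of the coset with height at most $X$ has the form $\yv_0\cdot\zv$ with $\zv\in\Gamma'$. By \eqref{eq:2.1} and \eqref{eq:2.3},
\[
H(\zv)\leq H(\yv_0^{-1})H(\yv_0\zv)\leq X^{k+1}.
\]
Hence the count is at most $|\HH_{\Gamma'}(X^{k+1})|$. By Lemma \ref{lem:3.8} applied to $\Gamma'$ (or trivially, via finiteness of the torsion, if the rank is $0$), this is $\ll((k+1)\log X)^{r-1}\ll(\log X)^{r-1}$, with the constant depending only on $\Gamma$. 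Multiplying by the bounded number of cosets completes the proof.
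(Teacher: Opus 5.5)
Your proof is correct and follows the paper's argument. A minimal vanishing subsum of $\sum b_ix_i$ together with Lemma~\ref{lem:3.5} confines $x_i/x_j$ to at most $c(k,r)$ values, uniformly in $T$. Each value cuts out a coset of $\Gamma_{i,j}$, which has rank $\leq r-1$ by \eqref{eq:2.7}. Each coset is then bounded by translating by a fixed element, using \eqref{eq:2.1} and \eqref{eq:2.3} to get height $\leq X^{k+1}$, and applying Lemma~\ref{lem:3.8}. Your partition into minimal blocks, the explicit justification of the rank drop and the rank-zero case are only cosmetic refinements; like the paper, you tacitly use that $T$ is a proper subspace.
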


\begin{proof}
Assume that
$\HH_{\Gamma}(X)\cap T\not=\emptyset$.
Choose a non-zero tuple
$\bv =(b_1\kdots b_k)\in K^k$ such that $b_1x_1+\cdots +b_kx_k=0$ for $\xv\in T$.
Then at least two of the entries of $\bv$ are non-zero. 

We first show that 
there is a finite subset $\TT$ of $K^*$ (possibly depending on $\bv$) of cardinality $\leq c'(k,r)$ depending only on $k,r$ with the following property. For every $\xv\in\Gamma$ with $b_1x_1+\cdots +b_kx_k=0$ there are $i\not= j\in\{ 1\kdots k\}$ and $\lambda\in\TT$ such that $b_ib_j\not= 0$ and $x_i/x_j=\lambda$. Indeed, let 
$\xv\in\Gamma$ be such that $\sum_{i=1}^k b_ix_i =0$. Take a minimal subset $I$ of $\{ 1\kdots k\}$ such that $b_i\not= 0$ for $i\in I$ 
and $\sum_{i\in I} b_ix_i= 0$. Then $I$ has cardinality at least $2$.
Pick $j\in I$. Then 
\[
\sum_{i\in I\setminus\{ j\}} (-b_i/b_j)(x_i/x_j) =1,
\]
and no proper subsum of the left-hand side is $0$. 
The existence of $\TT$ as above follows by applying Lemma \ref{lem:3.5} to the latter identity.

Fix $i,j,\lambda$, and let $\TT_{i,j,\lambda}$ be the set of $\xv\in\HH_{\Gamma}(X)$ such that $x_i/x_j =\lambda$. 
Further, for $i\not= j$, let $\Gamma_{i,j}$ be the group of $\xv\in\Gamma$ with
$x_i=x_j$. By \eqref{eq:2.7}, this group has rank smaller than $r$.
Notice that $\lambda$ may depend on $T$.
 
Assume that $\TT_{i,j,\lambda}$ is non-empty. Pick
$\xv_{0}$ from this set, and for each $\xv\in \TT_{i,j,\lambda}$, write $\xv =\xv_0\cdot \yv$. Then $\yv\in \Gamma_{i,j}$.
Further, for $\xv\in\TT_{i,j,\lambda}$ we have,
by \eqref{eq:2.1}, \eqref{eq:2.3},
\[
H(\yv )\leq H(\xv_0^{-1}\xv)\leq X^{k+1}.
\]  
Lemma \ref{lem:3.8} implies that if $\xv$ runs through $\TT_{i,j,\lambda}$,
then $\yv$ runs through a set of cardinality 
\[
\ll (\log X)^{\rank \Gamma_{i,j}}\ll (\log X)^{r-1},
\]
where the implied constant depends only on $\Gamma$, and is independent of $\lambda$. 
 This is clearly also true if $\TT_{i,j,\lambda}$ is empty. 
Hence, $|\TT_{i,j,\lambda}|\ll (\log X)^{r-1}$. Since the number of triples $(i,j,\lambda )$ is 
at most $k^2c'(k,r)$, we have
\[
|\HH_{\Gamma}(X)\cap T|\leq  
\sum_{i,j,\lambda} |\TT_{i,j,\lambda}|\ll (\log X)^{r-1}\ \ \text{as } X\to\infty ,
\]
where the implied constant depends only on $\Gamma$ and is independent of $T$. 
\end{proof}

\begin{lemma}\label{lem:3.10}
Let $\Gamma$ be a finitely generated subgroup of $(K^*)^k$ of rank $r\geq 1$
and let $C_2>C_1>0$. Put $x_0:=1$.
\\
Let $w\in S$ and $i\not= j\in\{0\kdots k\}$ be such that there are $\xv\in\Gamma$  with  
\[
\|x_i\|_w\not= \|x_j\|_w.
\]
Then 
\begin{equation}\label{eq:3.12}
\left|\Big\{ \xv\in\HH_{\Gamma}(X) :\, C_1\leq \frac{\|x_i\|_w}{\| x_j\|_w}\leq C_2\Big\}\right|\ll (\log X)^{r-1}\ \ \text{as } X\to\infty ,
\end{equation}
where the implied constant depends on $\Gamma$, $C_1$ and $C_2$.
\end{lemma}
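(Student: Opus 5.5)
The plan is to pass to exponent coordinates via the map $\varphi$ from \eqref{eq:3.13} and to count lattice points in a thin slab. Write $\xv=\mathbf{\zeta}\uv_1^{z_1}\cdots\uv_r^{z_r}$. Since torsion elements have $\|\cdot\|_w=1$, we get
\[
\log\frac{\|x_i\|_w}{\|x_j\|_w}=\ell_{wi}(\zv)-\ell_{wj}(\zv)=:L(\zv),
\]
with $\ell_{w0}:=0$ corresponding to $x_0=1$. Then $L$ is a linear form on $\Rr^r$ with real coefficients $\log\|u_{mi}\|_w-\log\|u_{mj}\|_w$ (with $\log\|u_{m0}\|_w=0$). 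By hypothesis there is $\xv\in\Gamma$ with $\|x_i\|_w\neq\|x_j\|_w$, so $L$ is not identically zero. The condition $C_1\le\|x_i\|_w/\|x_j\|_w\le C_2$ thus becomes $\log C_1\le L(\zv)\le\log C_2$, a slab of fixed width $\log(C_2/C_1)$ measured in the direction of $L$.

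Next I bound the size of the relevant $\zv$. As in the proof of Lemma \ref{lem:3.8}, $\varphi(\HH_\Gamma(X))=(\log X)\CC(\Gamma)\cap\Zz^r$, and $\CC(\Gamma)$ is bounded. Hence every relevant $\zv$ satisfies $\max_m|z_m|\le c\log X$, with $c$ depending only on $\Gamma$. The set to be estimated therefore has cardinality at most $|\Gamma_{\rm tors}|$ times the number of $\zv\in\Zz^r$ with $\max|z_m|\le c\log X$ and $L(\zv)\in[\log C_1,\log C_2]$.

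The counting step works as follows. Pick an index $m_0$ such that the coefficient $\beta:=\beta_{m_0}$ of $z_{m_0}$ in $L$ is nonzero. For each choice of the other $r-1$ coordinates, of which there are $\ll(\log X)^{r-1}$, the condition on $L$ confines $z_{m_0}$ to an interval of length $\log(C_2/C_1)/|\beta|$. That interval contains at most $1+\log(C_2/C_1)/|\beta|$ integers. Multiplying gives the bound $\ll(\log X)^{r-1}$, with implied constant depending on $\Gamma,C_1,C_2$; the dependence on the basis is removed as in the remark preceding Lemma \ref{lem:3.8}.

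There is no real obstacle in this argument. The only point needing care is that $L$ is a genuinely nonzero linear form on $\Zz^r$ (not merely nonconstant on $\Gamma$), and this follows because torsion contributes nothing to $\|\cdot\|_w$. A further detail is that when $r=1$ the bound reads $O(1)$, which the same argument gives directly.
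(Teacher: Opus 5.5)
Your proof is correct and follows essentially the paper's route. Both pass to exponent vectors via $\varphi$ and use the boundedness of $\CC(\Gamma)$ to confine $\zv$ to a region of size $O(\log X)$. Both then count lattice points in a slab of bounded width in the direction of the non-zero form $\ell_{wi}-\ell_{wj}$. The only difference is in the counting: you fiber over the remaining $r-1$ coordinates, whereas the paper encloses the set in a parallelepiped built from $m_1$ and auxiliary forms $m_2,\ldots,m_r$ and compares volumes using unit cubes.
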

\noindent
\textbf{Remark.} Notice that we allow here that one of $i,j$ is $0$.

\begin{proof}
Put $c_i:=\log C_i$ for $i=1,2$.
We use the notation from \eqref{eq:2.4}, \eqref{eq:2.5} and in addition, put $\ell_{0w}(\mathbf{\xi}):=0$. Further, 
put
$m_1(\mathbf{\xi}):=\ell_{iw}(\mathbf{\xi})-\ell_{jw}(\mathbf{\xi})$.
By our assumptions on $\Gamma$, $i,j,w$, the linear form $m_1$ is not identically $0$. 
Let $\HH$ denote the set given in \eqref{eq:3.12}. The map $\varphi$ defined by \eqref{eq:3.13} maps $\HH$ to
\[
\AA\cap \Zz^r ,\ \ \text{where } \AA:=\{ \mathbf{\xi}\in (\log X)\cdot \CC (\Gamma ),\ c_1\leq m_1(\mathbf{\xi})\leq c_2\},
\]
and is $|\Gamma_{{\rm tors}}|$ to $1$. We replace $\AA$ by a larger set that is easier to handle. 
Choose linear forms $m_2\kdots m_r$ from $\ell_{vh}$ ($v\in S$, $h=1\kdots k$) such that $m_1\kdots m_r$
are linearly independent. There is a constant $C>0$, depending only on $\Gamma$, such that for $\mathbf{\xi}\in\Rr^r$ we have
\[
\sum_{v\in S}\max \big( 0,\ell_{v1}(\mathbf{\xi})\kdots \ell_{vk}(\mathbf{\xi})\big)
\geq C\cdot \max \big(|m_1(\mathbf{\xi})|\kdots  |m_r (\mathbf{\xi})|\big).
\]
This can be proved by observing first that it suffices to show this for the set of
$\mathbf{\xi}\in\Rr^r$ for which the maximum on the right-hand side is $1$, second, that this set is compact,
and third, that the left-hand side is a continuous and positive real function,  hence assumes a minimum $C>0$ on this set. Thus, it follows that
\begin{align*}
\AA \subseteq \BB:= \{ \mathbf{\xi}\in\Rr^r :\, &c_1\leq m_1(\mathbf{\xi})\leq c_2,
\\
&\qquad  
|m_i(\mathbf{\xi})|\leq C^{-1}\log X\ \text{for } i=2\kdots k\}.
\end{align*}
We estimate $|\BB\cap\Zz^r|$ from above. Consider again the cubes $\KK_{\zv}$ defined by \eqref{eq:3.11}. There is $c>0$ depending only on $\Gamma$ such that
\begin{align*}
\bigcup_{\zv\in\BB\cap \Zz^r} \KK_{\zv}\subseteq
\{ \mathbf{\xi}\in\Rr^r :\, &c_1-c\leq m_1(\mathbf{\xi})\leq c_2+c,
\\[-0.25cm]
&\qquad
|m_i(\mathbf{\xi})|\leq c+C^{-1}\log X\ \text{for } i=2\kdots k\}.
\end{align*}
It follows that
\[
|\BB\cap\Zz^r|\ll (c_2-c_1+2c)(c+C^{-1}\log X)^{r-1}\ll (\log X)^{r-1}\ \ \text{as } X\to\infty .
\]
Hence, $|\HH |\leq |\Gamma_{{\rm tors}}|\cdot |\AA\cap\Zz^r|\ll (\log X)^{r-1}$ as $X\to\infty$.
\end{proof}

\section{Proof of Theorem \ref{thm:2.1}}\label{sec4}

We keep the notation introduced in Sections \ref{sec2} and \ref{sec3}.
In particular, $K$ is a number field, $k\geq 2$, $\av =(a_1\kdots a_k)\in (K^*)^k$, 
and $\Gamma$ is a finitely generated subgroup of $(K^*)^k$ of rank $r\geq 1$.
Let $S$ be the finite set of places of $K$ given by \eqref{eq:3.6}.

Let $\varepsilon =\varepsilon (X)$ be a positive, decreasing function of $X$, to be specified later. We first estimate from above $|\VV'(X)|$, where 
\begin{align*}
\VV'(X) := &\{ \xv\in \VV_{\Gamma ,\av}(X):\, 
\\
&\quad \exists w\in S\ \text{with } 
\|a_1x_1+\cdots +a_kx_k\|_w\leq \|\xv\|_w\cdot H(\xv )^{-\varepsilon}\}.
\end{align*}
Write $\VV'(X)=\bigcup_{w\in S,\, i=1\kdots k} \VV_{i,w}(X)$, where
\begin{align*}
\VV_{i,w}(X):=\Big\{ \xv\in\VV_{\Gamma ,\av}(X):\, &\|\xv\|_w=\|x_i\|_w,
\\[-0.15cm] 
&\|a_1x_1+\cdots +a_kx_k\|_w\leq \|x_i\|_w\cdot H(\xv )^{-\varepsilon}\Big\}.
\end{align*}

Fix $i,w$. 
To estimate from above $|\VV_{i,w}(X)|$, 
we apply Corollary \ref{cor:3.2}. 
Define the system of linear forms $\LL =\{ L_{jv}:\, v\in S,\, j=1\kdots k\}$, given by
\begin{align*}
&L_{jv}:=X_j\ \ (v\in S,\, j=1\kdots k,\ (v,j)\not= (w,i)),
\\
&L_{iw}:=a_1X_1+\cdots +a_kX_k.
\end{align*}
Let $\xv\in\VV_{i,w}(X)$. For $v\in S$, $j=1\kdots k$, define $e_{jv}(\xv ):=\log\|x_j\|_v/h(\xv )$. Let $\EE$ be the set from Lemma \ref{lem:3.3}, with $\eta =\varepsilon/(2k+2)$. Then 
\begin{equation}\label{eq:4.1}
|\EE |\ll \varepsilon^{-r}, 
\end{equation}
where here and below, constants implied by $\ll$, $\gg$ and the $O$-symbols
depend on $\Gamma ,\av$ only.
Choose $\ev\in\EE$ with
\begin{equation}\label{eq:4.2}
\sum_{v\in S}\sum_{j=1}^k |e_{jv}-e_{jv}(\xv )|\leq\eta .
\end{equation}
Next, define the tuple $\dv =(d_{jv}:\, v\in S,\, j=1\kdots k)$ by 
\[
d_{jv}:=e_{jv}\ (v\in S,\, j=1\kdots k,\ (v,j)\not= (w,i)),\ \ d_{iw}:=e_{iw}-\varepsilon .
\]
Consider the set of $\xv\in\VV_{i,w}(X)$ satisfying \eqref{eq:4.2}
for some fixed $\ev\in\EE$.
Take $Q:=H(\xv )$. We first estimate from above $H_{\LL ,\dv, Q}(\xv )$. One first verifies
that for $v\in S$,
\begin{align*}
\max_{1\leq j\leq k} \| L_{jv}(\xv )\|_vQ^{-d_{jv}} 
&\leq \max_{1\leq j\leq k} \| x_j\|_vQ^{-e_{jv}}
\\
&\leq \Big(\max_{1\leq j\leq k} \|x_j\|_wQ^{-e_{jv}(\xv )}\Big)\cdot Q^{\max_{j=1}^k |e_{jv}-e_{jv}(\xv )|}
\\
&\leq Q^{\sum_{j=1}^k |e_{jv}-e_{jv}(\xv )|},
\end{align*}
while clearly, $\|\xv\|_v=1$ for $v\in M_K\setminus S$. This implies
\begin{equation}\label{eq:4.3}
H_{\LL,\dv,Q}(\xv )\leq Q^{\eta}.
\end{equation}
Next, since by the product formula, $\sum_{v\in S}\sum_{j=1}^k e_{jv}(\xv )=0$,
\begin{align}\label{eq:4.4}
\medfrac{1}{k}\sum_{v\in S}\sum_{j=1}^k d_{jv}
&=
\medfrac{1}{k}\big(\sum_{v\in S}\sum_{j=1}^k e_{jv}-\varepsilon\big)
\\
\notag
&\leq \medfrac{1}{k}\big(\sum_{v\in S}\sum_{j=1}^k |e_{jv}-e_{jv}(\xv )|-\varepsilon\big)\leq \medfrac{1}{k}(\eta -\varepsilon ).
\end{align}
Lastly,
\begin{align}\label{eq:4.5}
&\sum_{v\in S}\max (d_{1v}\kdots d_{kv}) \leq \sum_{v\in S}\max (e_{1v}\kdots e_{kv})
\\
\notag
&\leq \sum_{v\in S}\max (e_{1v}(\xv )\kdots e_{kv}(\xv ))+\sum_{v\in S}\sum_{j=1}^k |e_{jv}-e_{jv}(\xv )|\leq 1+\eta .
\end{align}
We apply Corollary \ref{cor:3.2} with $\lambda =(\eta -\varepsilon )/k$, $\mu =-\eta$, $\theta =1+\eta$, $Q=H(\xv )$. With our choice $\eta =\varepsilon/(2k+2)$, we have
\[
\mu -\lambda =\frac{\varepsilon}{2k},\ \ \
\theta -\lambda \geq 1+\varepsilon,\ \ \
\delta = \frac{\mu -\lambda}{\theta -\lambda} \geq \frac{\varepsilon}{2k(1+\varepsilon)}.
\] 
It follows that for each given $\ev\in\EE$, the set of $\xv$ such that
\[ 
\xv\in\VV_{i,w}(X),\ \text{$\xv$ satisfies \eqref{eq:4.2}},\  
H(\xv )\geq C_2^{2k/\varepsilon}
\]
is contained in a union of $\ll \varepsilon^{-3}(\log\varepsilon^{-1})^2$ proper linear subspaces of $K^n$. Taking the union over all $\ev\in\EE$, using \eqref{eq:4.1}, it follows that the set of $\xv$ with
\[
\xv\in\VV_{i,w}(X),\ \ H(\xv )\geq C_2^{2k/\varepsilon}
\]
is contained in a union of
\begin{equation}\label{eq:4.6}
\ll (\varepsilon^{-1})^{r+3}(\log\varepsilon^{-1})^2 
\end{equation}
proper linear subspaces of $K^k$.

Let $T$ be one of these subspaces.
By Lemma \ref{lem:3.4}, each $\xv\in\VV_{i,w}(X)$ has $H(\xv )\ll X^2$. Now Lemma \ref{lem:3.9} implies that 
\begin{equation}\label{eq:4.7}
|\VV_{i,w}(X) \cap T|\ll (\log X)^{r-1},
\end{equation}
where the implied constant is independent of $T$.
Lastly, by Lemma \ref{lem:3.8}, 
\begin{equation}\label{eq:4.8}
|\{\xv\in \Gamma :\, H(\xv )\leq C_2^{2k/\varepsilon}\}|\ll \varepsilon^{-r}.
\end{equation}
Combining \eqref{eq:4.6}, \eqref{eq:4.7}, \eqref{eq:4.8}, we infer
\[
|\VV_{i,w}(X)|\ll (\log X)^{r-1}
\cdot (\varepsilon^{-1})^{r+3}(\log\varepsilon^{-1})^2 . 
\]
This holds for all $i=1\kdots k$, $w\in S$. It follows that $\VV'(X)$ has cardinality
\begin{equation}\label{eq:4.9}
|\VV'(X)|\ll (\log X)^{r-1}
\cdot (\varepsilon^{-1})^{r+3}(\log\varepsilon^{-1})^2 . 
\end{equation}

We now consider $\VV_{\Gamma ,\av}(X)\setminus \VV'(X)$.
Observe that for every $\xv$ in this set,
\[
\| a_1x_1+\cdots +a_kx_k\|_v\geq \|\xv\|_vH(\xv )^{-\varepsilon}\ \ \text{for all } v\in S,
\]
while 
\[
\max (1,\|a_1x_1+\cdots +a_kx_k\|_v)\geq 1=\max (1,\| \xv\|_v)\ \ \text{for }v\in M_K\setminus S.
\] 
Hence,
\begin{equation}\label{eq:4.9c}
H(a_1x_1+\cdots +a_kx_k)\gg H(\xv )^{1-s\varepsilon}\ \ 
\text{for } \xv\in \VV_{\Gamma ,\av}(X)\setminus\VV'(X),
\end{equation}
where $s=|S|$.
On the other hand, 
\begin{equation}\label{eq:4.9d}
H(a_1x_1+\cdots +a_kx_k)\ll H(\xv )\ \ \text{for }
\xv\in\Gamma .
\end{equation}
Define
\begin{align}\label{eq:4.9a}
\HH_{\Gamma}'(X):= &\Big\{ \xv\in\Gamma :\, H(\xv)\leq X,\ 
\\
\notag
&\quad\sum_{i\in I} a_ix_i\not= 0\ \text{for each non-empty $I\subseteq\{ 1\kdots k\}$}\Big\}.
\end{align}
Then by Lemmas \ref{lem:3.8} and \ref{lem:3.9},
\begin{equation}\label{eq:4.9b}
|\HH_{\Gamma}'(X)|=c(\Gamma )(\log X)^r+O((\log X)^{r-1})\ \ \text{as } X\to\infty .
\end{equation}
Recall that solutions with vanishing subsums have been excluded from 
$\VV_{\Gamma ,\av}(X)$. From \eqref{eq:4.9c}, \eqref{eq:4.9d} it follows that
there are positive constants $C_1,C_2$, depending only on $\Gamma ,\av$, such that
\[
\HH_{\Gamma}'(C_1X)\subseteq \VV_{\Gamma ,\av}(X)
\subseteq
\HH_{\Gamma}'(C_2X^{1/(1-s\varepsilon)})\cup \VV'(X),
\]
and using \eqref{eq:4.9b}, \eqref{eq:4.9} , we obtain
\begin{align}\label{eq:4.10}
&c(\Gamma )(\log X)^r+O\big( (\log X)^{r-1}\big)\leq |\VV_{\Gamma ,\av}(X)|
\\
\notag
&\quad
\leq c(\Gamma )(\log X)^r
+O\Big(\varepsilon\cdot (\log X)^r + (\log X)^{r-1}
\cdot (\varepsilon^{-1})^{r+3}(\log\varepsilon^{-1})^2\Big). 
\end{align} 
We now choose $\varepsilon =\varepsilon (X)=\big( (\log\log X)^2/\log X\big)^{1/(r+4)}$. Then \eqref{eq:4.9} and \eqref{eq:4.10} together give the formula stated in Theorem \ref{thm:2.1},
\[
|\VV_{\Gamma ,\av}(X)|=c(\Gamma )(\log X)^r\left( 
1+O\Big(\big(\medfrac{(\log\log X)^2}{\log X}\big)^{1/(r+4)}\Big)
\right)\ \ 
\text{as } X\to\infty . 
\]
\qed

\section{Proof of Theorem \ref{thm:2.2}}\label{sec4a}

Let as before $K$ be a number field, $k\geq 2$ an integer,
$a_1\kdots a_k\in K[X]$ non-zero polynomials, and $\alpha_1\kdots \alpha_k\in K^*$, satisfying \eqref{eq:2.7a}, i.e., $\alpha_i/\alpha_j$ is not a root of unity for each pair $i\not= j\in\{ 1\kdots k\}$. Further, let 
\[
u_n:=\sum_{i=1}^k a_i(n)\alpha_i^n.
\] 
Our purpose is to find an asymptotic formula for $|\UU (X)|$, where
\[
\UU (X):=\{ n\in\Zz_{\geq 0}:\, H(u_n)\leq X\}.
\]
In this section, constants implied by $\ll$, $\gg$ and $O$-symbols will depend only on $a_1\kdots a_k$, $\alpha_1\kdots \alpha_k$, and are all effectively computable. 
Let $S$ be a finite set of places of $K$, containing all infinite places, such that the coefficients of $a_1\kdots a_k$ are all $S$-integers, while $\alpha_1\kdots \alpha_k$ are all $S$-units.
Let $\varepsilon =\varepsilon (X)$ be a positive, decreasing function of $X$, to be specified later. Put
\[
H:=H(\alpha_1\kdots \alpha_k)=\prod_{v\in M_K}\max (1,\|\alpha_1\|_v\kdots \|\alpha_k\|_v).
\]

We first estimate from above $|\UU'(X)|$, where 
\begin{align*}
\UU'(X) := &\Big\{ n\in\Zz_{\geq 0}:\, H(u_n)\leq X,
\\[-0.1cm]
&\quad \exists w\in S\ \text{with } 
\|u_n\|_w\leq \max_{1\leq i\leq k} \|a_i(n)\alpha_i^n\|_w\cdot H^{-n\varepsilon}\Big\}.
\end{align*}
Write $\UU'(X)=\bigcup_{w\in S,\, i=1\kdots k} \UU_{i,w}(X)$, where
\begin{align*}
\UU_{i,w}(X):= &\Big\{ n\in\Zz_{\geq 0}:\, H(u_n)\leq X,\, \|a_i(n)\alpha_i^n\|_w=\max_{1\leq j\leq k} \|a_j(n)\alpha_j^n\|_w,
\\[-0.15cm] 
&\quad \|u_n\|_w\leq \|a_i(n)\alpha_i^n\|_w\cdot H^{-n\varepsilon}\Big\}.
\end{align*}

Fix $i,w$. 
To estimate from above $|\UU_{i,w}(X)|$, 
we apply Corollary \ref{cor:3.2}, with
\begin{equation}\label{eq:4.11}
\xv =(x_1\kdots x_k) =(a_1(n)\alpha_1^n\kdots a_k(n)\alpha_k^n),\ \ Q=H^n.
\end{equation} 
Define the system of linear forms $\LL =\{ L_{jv}:\, v\in S,\, j=1\kdots k\}$ by
\begin{align*}
&L_{jv}:=X_j\ \ (v\in S,\, j=1\kdots k,\ (v,j)\not= (w,i)),
\\
&L_{iw}:=X_1+\cdots +X_k.
\end{align*}
Further, define
\begin{align*}
&e_{jv}:=\frac{\log \|\alpha_j\|_v}{\log H}\ (v\in S,\, j=1\kdots k),
\\
&d_{jv}:=e_{jv}\ (v\in S,\, j=1\kdots k,\, (v,j)\not= (w,i)),\ d_{iw}:=e_{iw}-\varepsilon,
\end{align*} 
and let $\dv :=(d_{jv}:\, v\in S,\, j=1\kdots k)$. 

We first estimate from above $H_{\LL ,\dv, Q}(\xv )$,
with $n\in\UU_{i,w}(X)$ and $\xv$, $Q$ as in \eqref{eq:4.11}.
For $v\in S$ we have 
\begin{align*}
\max_{1\leq j\leq k} \|L_{jv}(\xv )\|_vQ^{-d_{iv}} &\leq \max_{1\leq j\leq k}\|x_j\|_wQ^{-e_{iv}}
\leq \max_{1\leq j\leq k}\|a_j(n)\|_v,
\end{align*}
while $\|\xv\|_v\leq 1$ for $v\in M_K\setminus S$.
Then
\[
H_{\LL ,\dv,Q}(\xv ) \leq \prod_{v\in S}\max_{1\leq j\leq k}\|a_j(n)\|_v\leq Q^{\varepsilon /2k},
\]
provided that
\begin{equation}\label{eq:4.12}
n\gg \varepsilon^{-2}.
\end{equation}
Indeed, $\prod_{v\in S}\max_{1\leq j\leq k}\|a_j(n)\|_v\ll n^{O(1)}$,
and if $n$ satisfies \eqref{eq:4.12} with a sufficiently large implied constant,
this is smaller than $Q^{\varepsilon /2k}=H^{n\varepsilon /2k}$.

Next, by the product formula we have $\sum_{v\in S}\sum_{j=1}^k e_{jv}=0$, which implies
\[
\frac{1}{k}\sum_{v\in S}\sum_{j=1}^k d_{jv}=-\varepsilon /k ,
\]
while clearly,
\[
\sum_{v\in S}\max (d_{1v}\kdots d_{kv})\leq \sum_{v\in S}\max (e_{1v}\kdots e_{kv})\leq 1.
\]
We apply  Corollary \ref{cor:3.2} with $\lambda =-\varepsilon /k$, $\mu =-\varepsilon /2k$, $\theta =1$. By taking the implied constant in \eqref{eq:4.12} sufficiently large, we can guarantee that $Q=H^n$ satisfies \eqref{eq:3.5}. It follows that the set of vectors $\xv =(a_1(n)\alpha_1^n\kdots a_k(n)\alpha_k^n)$
with $n\in \UU_{i,w}(X)$ and with \eqref{eq:4.12} lies in a union of 
\[
\ll \varepsilon^{-3}(\log\varepsilon^{-1})^2
\]
proper linear subspaces of $K^n$. Consider such a subspace $T$, and take a non-trivial equation 
$b_1X_1+\cdots +b_kX_k=0$ of $T$. Then if $\xv\in T$, we have $\sum_{j=1}^k b_ja_j(n)\alpha_j^n=0$. By Lemma \ref{lem:3.7}, the number of $n\in\Zz_{\geq 0}$ satisfying such a relation is $\ll 1$. Further, the number of $n$ violating \eqref{eq:4.12} is
$\ll \varepsilon^{-2}$. It follows that altogether,
\[
|\UU_{i,w}(X)|\ll \varepsilon^{-3}(\log\varepsilon^{-1})^2.
\]
By taking the union over all $w,i$, we obtain
\begin{equation}\label{eq:4.13}
|\UU'(X)|\ll \varepsilon^{-3}(\log\varepsilon^{-1})^2.
\end{equation}

We now consider $\UU (X)\setminus \UU'(X)$.
Observe that for every integer $n$ in this set,
\[
\|u_n\|_v\geq \max_{1\leq i\leq k} \| a_i(n)\alpha_i^n\|_vH^{-n\varepsilon}\ \ \text{for } v\in S,
\]
hence
\[
\max (1,\|u_n\|_v)\geq R_v(n)\max (1,\|\alpha_1\|_v\kdots \|\alpha_k\|_v)^nH^{-n\varepsilon}\ \ \ 
\text{for } v\in S,
\]
where $R_v(n):=\min(1,\|a_1(n)\|_v\kdots \|a_k(n)\|_v)$ for $v\in S$. 
Further, the set $S$ is such that all $\alpha_i$ are $S$-units. Hence,
\[
H(u_n)\geq \big(\prod_{v\in S}R_v(n)\big) H^{n(1-s\varepsilon)}\ \ 
\text{for } n\in \UU(X)\setminus\UU'(X),
\]
where $s:=|S|$.
Let $M_v(n):=\max (1,\|a_1(n)\|_v\kdots \|a_k(n)\|_v)$ for $v\in M_K$.
Assuming $a_1(n)\cdots a_k(n)\not= 0$, we have 
by the product formula,
\begin{align*}
\prod_{v\in S}R_v(n) 
&\quad \geq \prod_{v\in S}\frac{\|a_1(n)\cdots a_k(n)\|_v}{M_v(n)^{k+1}}
\\
&\quad \geq \prod_{v\in M_K\setminus S} \|a_1(n)\cdots a_k(n)\|_v^{-1}\cdot
\prod_{v\in S}M_v(n)^{-k-1}
\\
&\quad \geq \prod_{v\in M_K}M_v(n)^{-k-1}\gg n^{O(1)}.
\end{align*}
It follows that
\[
H(u_n)\gg n^{O(1)}H^{n(1-s\varepsilon )}\ \ \text{for } n\in \UU (X)\setminus\UU'(X),
\]
where we have incorporated those $n$ with $a_1(n)\cdots a_k(n)=0$ by deminishing the implied constants.
Hence,
\begin{equation}\label{eq:4.14}
H^n\ll \big(H(u_n)(\log (2+H(u_n)))^{O(1)}\big)^{1/(1-s\varepsilon )}
\ \ \text{for } n\in \UU (X)\setminus\UU'(X).
\end{equation}
On the other hand, for $n\in \UU (X)$ we have
\begin{align}\label{eq:4.15}
H(u_n) &\ll \prod_{v\in M_K}\max (1,\|a_1(n)\alpha_1^n\|_v\kdots \|a_k(n)\alpha_k^n\|_v)
\\
\notag
&\quad \ll H(a_1(n)\kdots a_k(n))H^n\ll n^{O(1)}H^n.
\end{align}
From \eqref{eq:4.15}, \eqref{eq:4.14}, we infer that there are
effectively computable positive constants $C_1,C_2$,
$\gamma_1 ,\gamma_2$,
depending only on $a_1\kdots a_k$, $\alpha_1\kdots \alpha_k$,
such that
\begin{align*}
&\{ n\in\Zz_{\geq 0}:\, H^n\leq C_1X(\log X)^{-\gamma_1}\}\subseteq \UU (X)
\\
&\quad \subseteq
\big\{ n\in\Zz_{\geq 0}:\, H^n\leq C_2\big(X(\log X)^{\gamma_2}\big)^{1/(1-s\varepsilon )}\big\}
\cup \UU'(X).
\end{align*}
By comparing cardinalities, we obtain
\begin{align*}
&\frac{\log X}{\log H} +O(\log\log X)
\leq |\UU (X)|
\\
&\quad
\leq \frac{1}{(1-s\varepsilon )\log H}\big(\log X +O(\log\log X)\big) +O(\varepsilon^{-3}(\log \varepsilon^{-1})^2)
\\
&\quad
\leq \frac{\log X}{\log H}+O\big(\varepsilon\log X+\log\log X +\varepsilon^{-3}(\log \varepsilon^{-1})^2\big).
\end{align*}
By choosing $\varepsilon=\big( (\log\log X)^2/\log X\big)^{1/4}$, we arrive at
\[
|\UU (X)|=\frac{\log X}{\log H}\left( 1+O\Big(\Big(\frac{(\log\log X)^2}{\log X}\Big)^{1/4}\Big)\right).
\]
This completes our proof.\qed

\section{Proofs of Theorems \ref{thm:2.3a} and \ref{thm:2.3}}\label{sec5}

As before, $K$ is an algebraic number field and $k\geq 2$ an integer.

\begin{proof}[Proof of Theorem \ref{thm:2.3a}]
Let $S$ be given by \eqref{eq:3.6} and
assume that $\Gamma$ satisfies \eqref{eq:2.7b}. Here and below, the constants implied by $\ll$ and $\gg$ and the $O$-symbols depend on $\Gamma$ and $\av$.
Put $x_0:=1$ and take a constant $C_1>0$ which is chosen sufficiently large,
but depending only on $\Gamma$ and $\av$.
We first estimate from above $|\VV'(X)|$, 
where 
\[
\VV'(X):=\bigcup_{w\in S,\, i\not= j\in\{ 0\kdots k\}}\VV_{i,j,w}(X),
\]
with
\[
\VV_{i,j,w}(X):=\{ \xv\in\VV_{\Gamma ,\av}(X):\, \|x_j\|_w\leq \|x_i\|_w\leq C_1\|x_j\|_w\}
\]
(note that we allow $i,j$ to be $0$).
We estimate from above $|\VV_{i,j,w}(X)|$ for all $i,j,w$ with $i\not= j$, and subsequently, $|\VV'(X)|$.

Fix $i,j,w$. Lemma \ref{lem:3.4} implies that 
for $\xv\in\VV_{i,j,w}(X)$ we have 
\[
H(\xv )\ll H(a_1x_1+\cdots +a_kx_k)^2.
\]
Therefore, there is a constant $C_2>0$ depending only on $\Gamma$ and $\av$, such that
\[
\VV_{i,j,w}(X)\subseteq \{ \xv\in \HH_{\Gamma} (C_2X^2):
\ 
\|x_j\|_w\leq \|x_i\|_w\leq C_1\|x_j\|_w\}.
\]
By assumption \eqref{eq:2.7b},
there is $\xv\in\Gamma$ such that $\|x_i\|_w\not= \|x_j\|_w$.
So we can apply Lemma \ref{lem:3.10} to the right-hand side, and obtain
\[
|\VV_{i,j,w}(X)|\ll (\log X)^{r-1}\ \ \text{as } X\to\infty .
\]
This holds for all $i\not=j\in\{ 0\kdots k\}$ and $w\in S$. Thus, it follows that
\begin{equation}\label{eq:5.1}
|\VV'(X)|\ll (\log X)^{r-1}\ \ \text{as } X\to\infty .
\end{equation} 

We now consider $\VV_{\Gamma ,\av}(X)\setminus \VV'(X)$. Notice that for each $\xv$ in this set, and for each $v\in S$, there is $i(v)\in\{ 0\kdots k\}$ such that
\[
\max (1,\|\xv\|_v)=\|x_{i(v)}\|_v\geq C_1\max_{j\in\{ 0\kdots k\}\setminus\{ i(v)\}}\|x_j\|_v.
\]
Taking $C_1$ sufficiently large, we have for $v\in S$ that
\[
\max (1,\|a_1x_1+\cdots +a_kx_k\|_v)\gg \max (1,\|\xv\|_v)
\]
if $i(v)=0$, and
\begin{align*}
\max (1,\|a_1x_1+\cdots +a_kx_k\|_v)&\gg
 \|a_{i(v)}x_{i(v)}\|_v-\sum_{j\not=i(v)}\|a_jx_j\|_v
\\ 
& \gg \|x_{i(v)}\|_v=\max (1,\|\xv\|_v)
\end{align*}
if $i(v)\not= 0$. Further, it is clear that for $v\in M_K\setminus S$, 
\[
\max (1,\|a_1x_1+\cdots +a_kx_k\|_v)\geq 1=\max (1,\|\xv\|_v).
\]
Therefore,
\[
H(a_1x_1+\cdots +a_kx_k)\gg H(\xv )\ \ \text{for } \xv\in\VV_{\Gamma ,\av}(X)
\setminus \VV'(X).
\]
Clearly, we have also
\[
H(a_1x_1+\cdots +a_kx_k)\ll H(\xv )\ \ \text{for } \xv\in\Gamma .
\]
Thus, there are constants $C_3,C_4>0$, depending only on $\Gamma$, $\av$, such that
\[
\HH_{\Gamma }'(C_3X)\subseteq \VV_{\Gamma ,\av}(X)
\subseteq
\HH_{\Gamma}'(C_4X)\cup \VV'(X).
\]
Finally, using \eqref{eq:4.9b} and \eqref{eq:5.1},
we arrive at the formula stated in Theorem \ref{thm:2.3},
\[
|\VV_{\Gamma_1^k,\av}(X)|
=c(\Gamma )(\log X)^{r}+O((\log X)^{r-1})\ \ \text{as } X\to\infty .
\]
\end{proof}

\begin{proof}[Proof of Theorem \ref{thm:2.3}]
Let $\Gamma =\Gamma_1^k$, where $\Gamma_1$ has rank $r_1\geq 1$. So $\Gamma$ has rank $kr_1$.
Condition \eqref{eq:3.6} now becomes
\[
S=\{ v\in M_K:\, \text{there is $x\in\Gamma_1$ with $\|x\|_v\not=1$}\}.
\]
Given $v\in S$, $i\in\{ 1\kdots k\}$ we can take
$\xv =(x_1\kdots x_k)\in\Gamma_1^k$ with $\|x_i\|_v\not=1$ and $x_j=1$ for all $j\not= i$.
Thus, we see that $\Gamma =\Gamma_1^k$ satisfies \eqref{eq:2.7b}.
Now Theorem \ref{thm:2.3} follows at once from Theorem \ref{thm:2.3a} with $r=kr_1$.
\end{proof}

\section{Proof of Theorem \ref{thm:2.4}}\label{sec6}

We closely follow the proof of \cite[Theorem 2]{ftz}.
We take Theorem \ref{thm:2.3} as a starting point. As before, $K$ is a number field and $k\geq 2$
an integer.

Let $\AA$ be a finite set of tuples from $(K^*)^k$.
Let $\VV_{\AA} (X)$ be the collection of tuples $(\alpha_1\kdots \alpha_k)$ such that 
\begin{align*}
&(\alpha_1\kdots \alpha_k)=(a_1x_1\kdots a_kx_k)\ \text{with } 
(a_1\kdots a_k)\in\AA,\ x_1\kdots x_k\in \Gamma_1,
\\
&H(\alpha_1+\cdots +\alpha_k)\leq X,
\\
&\text{none of the subsums of $\alpha_1+\cdots +\alpha_k$ vanishes.}
\end{align*}
In what follows, constants implied by $O$-symbols and $\ll$-symbols will depend only on $k,\Gamma_1,\AA$.

\begin{lemma}\label{lem6.0} (i) We have for $X\to\infty$,
\[
|\VV_{\AA}(X)|\ll (\log X)^{kr_1}.
\]
(ii) If moreover $\AA$ satisfies \eqref{eq1.c}, then for $X\to\infty$, 
\[
|\VV_{\AA} (X)|= |\AA|\cdot c(\Gamma_1^k)\cdot (\log X)^{kr_1}+O((\log X)^{kr_1-1}).
\]
\end{lemma}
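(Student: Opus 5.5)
The plan is to decompose $\VV_{\AA}(X)$ according to the coefficient tuple $\av\in\AA$ and reduce each piece to Theorem \ref{thm:2.3}. For each $\av\in\AA$, let $\VV_{\av}(X)$ be the set of tuples $(a_1x_1\kdots a_kx_k)$ with $\xv\in\Gamma_1^k$ and $\xv\in\VV_{\Gamma_1^k,\av}(X)$. Since all $a_i\neq 0$, the map $\xv\mapsto(a_1x_1\kdots a_kx_k)$ is injective. Hence
\[
|\VV_{\av}(X)|=|\VV_{\Gamma_1^k,\av}(X)|,
\]
and by definition $\VV_{\AA}(X)=\bigcup_{\av\in\AA}\VV_{\av}(X)$.

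For part (i), use the union bound: $|\VV_{\AA}(X)|\leq\sum_{\av\in\AA}|\VV_{\Gamma_1^k,\av}(X)|$. By Theorem \ref{thm:2.3}, each summand is $c(\Gamma_1^k)(\log X)^{kr_1}+O((\log X)^{kr_1-1})\ll(\log X)^{kr_1}$. Since $\AA$ is finite, this gives (i). No hypothesis on $\AA$ is needed here.

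For part (ii), show that under \eqref{eq1.c} the sets $\VV_{\av}(X)$, $\av\in\AA$, are pairwise disjoint. Suppose $(a_1x_1\kdots a_kx_k)=(b_1y_1\kdots b_ky_k)$ with $\av,\bv\in\AA$ and $\xv,\yv\in\Gamma_1^k$. Then $a_i/b_i=y_i/x_i\in\Gamma_1$ for every $i$, so $\av/\bv\in\Gamma_1^k$, and \eqref{eq1.c} forces $\av=\bv$. The union is therefore disjoint, and summing the asymptotic formula of Theorem \ref{thm:2.3} over the $|\AA|$ tuples gives
\[
|\VV_{\AA}(X)|=|\AA|\cdot c(\Gamma_1^k)(\log X)^{kr_1}+O((\log X)^{kr_1-1}).
\]
The implied constants depend only on $\Gamma_1$, $k$ and $\AA$.

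The only point requiring care is the disjointness argument in (ii), which is where hypothesis \eqref{eq1.c} enters. Two small checks: Theorem \ref{thm:2.3} requires $k\geq 2$, which holds in this section; and the constant $c(\Gamma_1^k)$ does not depend on $\av$, which is clear from its definition \eqref{eq:2.6}.
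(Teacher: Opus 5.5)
Your proposal is correct and follows essentially the same route as the paper. You bound $|\VV_{\AA}(X)|$ by $\sum_{\av\in\AA}|\VV_{\Gamma_1^k,\av}(X)|$ via Theorem \ref{thm:2.3}, with equality under \eqref{eq1.c} because the pieces for distinct $\av,\bv$ are disjoint; you also spell out the injectivity of $\xv\mapsto(a_1x_1\kdots a_kx_k)$ and the disjointness check, which the paper leaves as "trivially".
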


\begin{proof}
This follows easily from Theorem \ref{thm:2.3}. Notice that, trivially, $|\VV_{\AA}(X)|\leq\sum_{\av} |\VV_{\av}(X)|$ where the sum is taken over all $\av\in\AA$. If $\AA$ satisfies \eqref{eq1.c} then
we have equality since $\VV_{\av}(X)$, $\VV_{\bv}(X)$ are disjoint for any two distinct $\av ,\bv\in\AA$.  
\end{proof}

We assume henceforth that $\AA$ satisfies \eqref{eq1.a}, \eqref{eq1.c}, \eqref{eq1.b}.	 
Note that thanks to \eqref{eq1.b}, the set $\VV_{\AA} (X)$ is such that if $(\alpha_1\kdots \alpha_k)\in\VV_{\AA} (X)$, 
then also $(\alpha_{\sigma (1)}\kdots \alpha_{\sigma (k)})\in\VV_{\AA} (X)$
for each $\sigma\in S_k$.
Given $(\alpha_1\kdots \alpha_k)\in\VV_{\AA}(X)$, let
\[
(\alpha_1\kdots \alpha_k)_P:=\{ (\alpha_{\sigma (1)}\kdots \alpha_{\sigma (k)}):\, \sigma\in S_k\}.
\]

\begin{lemma} \label{lem6.1}
Let $\VV_{\AA}^*(X)$ be the 
set of 
$(\alpha_1\kdots \alpha_k)\in \VV_{\AA} (X)$ such that $(\alpha_1\kdots \alpha_k)_P$ has precisely $k!$ elements. Then
\[
|\VV_{\AA} (X)\setminus\VV_{\AA}^*(X)|\ll (\log X)^{(k-1)r_1}\ \ \text{as $X\to\infty$.}
\]	
\end{lemma}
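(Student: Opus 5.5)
If $(\alpha_1\kdots \alpha_k)_P$ has fewer than $k!$ elements, then some non-trivial permutation fixes the tuple. Hence $\alpha_i=\alpha_j$ for some pair $i\not= j$. So it suffices to fix a pair $i<j$ and a tuple $\av\in\AA$, and to bound the number of $\xv\in\Gamma_1^k$ with $(a_1x_1\kdots a_kx_k)\in\VV_{\AA}(X)$ and $a_ix_i=a_jx_j$. Since $\AA$ is finite and there are $\binom{k}{2}$ pairs, summing over these choices costs only a constant factor.

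Fix $\av$ and $i<j$. The relation $a_ix_i=a_jx_j$ gives $a_i/a_j=x_j/x_i\in\Gamma_1$, and then \eqref{eq1.a} forces $a_i=a_j$, hence $x_i=x_j$. Thus the relevant $\xv$ lie in the subgroup $\Gamma_{i,j}:=\{\xv\in\Gamma_1^k:\, x_i=x_j\}\cong\Gamma_1^{k-1}$, of rank $(k-1)r_1$. (Without \eqref{eq1.a} we would only get a coset $\xv_0\Gamma_{i,j}$; the same argument works there too, with a height shift as in the proof of Lemma \ref{lem:3.9}.) Now rewrite the sum: $\alpha_1+\cdots+\alpha_k=\sum_{l\not= j}a'_lx_l$, where $a'_l=a_l$ for $l\not= i$ and $a'_i=2a_i$. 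Since no subsum of the original sum vanishes, no subsum of this $(k-1)$-term sum vanishes either: each of its subsums is a subsum of the original (a term $2a_ix_i$ corresponds to $a_ix_i+a_jx_j$). Therefore the tuple $(x_l)_{l\not= j}$ belongs to $\VV_{\Gamma_1^{k-1},\av'}(X)$, and the map $\xv\mapsto (x_l)_{l\not= j}$ is injective on $\Gamma_{i,j}$.

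If $k-1\geq 2$, Theorem \ref{thm:2.3} applied to $\Gamma_1^{k-1}$ gives $|\VV_{\Gamma_1^{k-1},\av'}(X)|\ll(\log X)^{(k-1)r_1}$. If $k=2$, we are counting $x\in\Gamma_1$ with $H(2a_1x)\leq X$. Since $H(x)\leq H(2a_1)H(2a_1x)\ll X$, Lemma \ref{lem:3.8} applied to $\Gamma_1\subseteq K^*$ bounds this by $\ll(\log X)^{r_1}$. (This argument is the one-variable case of Lemma \ref{lem:3.4}, which can also be used directly: $H(\xv)\ll X^2$, and then Lemma \ref{lem:3.8} gives the bound $\ll(\log X^2)^{(k-1)r_1}$ for all $k$ at once.) Summing over the finitely many $\av$ and pairs $(i,j)$ yields the lemma.

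The main point needing care is the reduction to a group of rank $(k-1)r_1$ together with the preservation of non-degeneracy after merging the two equal terms. I would avoid the non-degeneracy issue entirely by using the height comparison from Lemma \ref{lem:3.4} in the original $k$-variable setting: the original sum is non-degenerate, so $H(\xv)\ll X^2$. Then counting $\xv\in\Gamma_{i,j}$ with $H(\xv)\ll X^2$ via Lemma \ref{lem:3.8} gives $\ll(\log X)^{(k-1)r_1}$ directly.
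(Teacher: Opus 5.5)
Your proposal is correct. Its main line is the paper's argument. A tuple in $\VV_{\AA}(X)\setminus\VV_{\AA}^*(X)$ has $\alpha_i=\alpha_j$ for some $i\neq j$, and \eqref{eq1.a} forces $a_i=a_j$ and $x_i=x_j$. Merging the two equal terms into $2a_ix_i$ then lands injectively in a $\VV$-set of $(k-1)$-tuples, which the paper bounds by Lemma \ref{lem6.0}(i), i.e.\ by Theorem \ref{thm:2.3}.

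You make explicit two points the paper leaves implicit. First, merging preserves non-degeneracy. Second, for $k=2$ the merged sum has only one term, so Theorem \ref{thm:2.3} (stated for at least two summands) does not literally apply and a direct count is needed.

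Your preferred variant is a genuinely different and somewhat more economical route. You apply Lemma \ref{lem:3.4} with $\varepsilon=1$ to the original non-degenerate $k$-term sum to get $H(\xv)\ll X^2$. You then count points of height $\ll X^2$ in the rank-$(k-1)r_1$ subgroup $\Gamma_{i,j}=\{\xv\in\Gamma_1^k:\,x_i=x_j\}$ with Lemma \ref{lem:3.8}. This uses only the ingredients from which Theorem \ref{thm:2.3} is itself built, so nothing changes in effectivity: both routes are ineffective through Lemma \ref{lem:3.4}. It needs no non-degeneracy check after merging and handles all $k\geq 2$ uniformly. The paper's route instead reuses Lemma \ref{lem6.0}(i) as a black box, following the same template as the proof of Lemma \ref{lem6.2}.
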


\begin{proof} Let $(\alpha_1\kdots \alpha_k)=(a_1x_1\kdots a_kx_k)\in \VV_{\AA} (X)\setminus\VV_{\AA}^*(X)$. Then there are distinct $\sigma$, $\tau\in S_k$
such that $(\alpha_{\sigma (1)}\kdots \alpha_{\sigma (k)})=(\alpha_{\tau (1)}\kdots \alpha_{\tau (k)})$,
which implies that there are distinct indices $i,j$ such that $\alpha_i=\alpha_j$.
By \eqref{eq1.a} we know that $a_i=a_j$, and thus, $x_i=x_j$.
So if we replace $\alpha_i, \alpha_j$ by the single entry $2\alpha_i$, we get a tuple from $\VV_{\AA'}(X)$, where  
$\AA'$ consists of all $(k-1)$-tuples obtained by taking those tuples from $\AA$ having two equal entries, say $a_i=a_j$
for some distinct indices $i,j$, and replacing $a_i,a_j$ by the single entry $2a_i$.
Now use that by the first part of Lemma \ref{lem6.0} we have $|\VV_{\AA'}(X)|\ll (\log X)^{(k-1)r_1}$.
\end{proof}

\begin{lemma}\label{lem6.2}
Let $\VV_{\AA}^{**}(X)$ be the set of tuples $(\alpha_1\kdots\alpha_k)\in\VV_{\AA}(X)$ such that there is
$(\beta_1\kdots \beta_k)\in \VV_{\AA}(X)\setminus (\alpha_1\kdots \alpha_k)_P$ with $\sum_{i=1}^k\beta_i=\sum_{i=1}^k\alpha_i$. Then
\[
|\VV_{\AA}^{**}(X)| \ll (\log X)^{(k-1)r_1}\ \ \text{as } X\to\infty .
\]
\end{lemma}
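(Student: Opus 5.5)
We want to bound the number of $\boldsymbol{\alpha}=(a_1x_1\kdots a_kx_k)\in\VV_{\AA}(X)$ for which some $\boldsymbol{\beta}=(b_1y_1\kdots b_ky_k)\in\VV_{\AA}(X)$, not a permutation of $\boldsymbol{\alpha}$, has the same coordinate sum. The plan is to turn the coincidence of sums into a vanishing sum of $2k$ terms and then analyse its minimal vanishing subsums with Lemma \ref{lem:3.5}.

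\textbf{Step 1: the relation.} Write the coincidence as
\[
a_1x_1+\cdots +a_kx_k-b_1y_1-\cdots -b_ky_k=0 .
\]
Neither $\sum_i a_ix_i$ nor $\sum_i b_iy_i$ has a vanishing subsum. Hence every minimal vanishing subsum of this $2k$-term relation uses at least one $a_ix_i$ and at least one $b_jy_j$.

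\textbf{Step 2: partition into minimal pieces.} Split $\{1\kdots k\}\sqcup\{1\kdots k\}$ into blocks $I_1\sqcup J_1\kdots I_m\sqcup J_m$. Choose them so that each block sum $\sum_{i\in I_l}a_ix_i-\sum_{j\in J_l}b_jy_j$ vanishes and has no vanishing proper subsum. There are finitely many such partitions, so it suffices to fix one.

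\textbf{Step 3: a block with $|I_l|+|J_l|\ge3$ (the main case).} Suppose some block has $|I_l|+|J_l|\geq 3$, with say $i_0\in I_l$. Divide the block by $a_{i_0}x_{i_0}$. Lemma \ref{lem:3.5}, applied to the group generated by $\Gamma_1^{2k}$, shows the ratios $x_i/x_{i_0}$ ($i\in I_l$) and $y_j/x_{i_0}$ ($j\in J_l$) take $O(1)$ values. Since $|I_l|+|J_l|\geq3$, at least two distinct indices lie in this block.
\begin{itemize}
\item[(a)] If $|I_l|\geq 2$, then $x_i/x_{i_0}=\lambda$ for some $i\neq i_0$ in $I_l$ and one of $O(1)$ values $\lambda$.
\item[(b)] If $|I_l|=1$, then $|J_l|\geq 2$, so $y_j/y_{j'}$ is fixed for two distinct $j,j'\in J_l$.
\end{itemize}
In case (a) we count $\boldsymbol{\alpha}$ directly. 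By Lemma \ref{lem:3.4}, $H(\xv)\ll X^2$. The $\xv\in\Gamma_1^k$ with $x_i/x_{i_0}=\lambda$ form a coset of the subgroup $\{x_i=x_{i_0}\}$, which has rank $(k-1)r_1$. The coset argument in the proof of Lemma \ref{lem:3.9}, combined with Lemma \ref{lem:3.8}, gives $\ll(\log X)^{(k-1)r_1}$ such $\xv$. In case (b) the same argument bounds the number of $\boldsymbol{\beta}$ by $\ll(\log X)^{(k-1)r_1}$. Each such $\boldsymbol{\beta}$ fixes the common sum $\sigma=\sum_i\alpha_i$, so it remains to count the $\boldsymbol{\alpha}$ with that sum. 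For each $\av\in\AA$, these are non-degenerate solutions of $\sum_i(a_i/\sigma)x_i=1$, and Lemma \ref{lem:3.5} gives $O(1)$ of them. So case (b) also contributes $\ll(\log X)^{(k-1)r_1}$.

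\textbf{Step 4: all blocks of size two.} Otherwise every block is a pair $a_ix_i=b_jy_j$, so $\boldsymbol{\beta}$ is a permutation of $\boldsymbol{\alpha}$. This contradicts $\boldsymbol{\beta}\notin(\alpha_1\kdots\alpha_k)_P$, so this case does not occur.

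The main obstacle is case (b) of Step 3. There the restriction falls on $\boldsymbol{\beta}$ rather than $\boldsymbol{\alpha}$, so it must be transferred back through the finiteness of representations of a fixed sum; one must check that Lemma \ref{lem:3.5} is legitimately applicable, which the non-degeneracy of $\boldsymbol{\alpha}$ guarantees. Summing over the finitely many partitions, index choices and values $\lambda$ gives the stated bound.
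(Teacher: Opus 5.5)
Your proof is correct and shares the paper's skeleton. Both decompose the $2k$-term relation into minimal vanishing subsums, use Lemma \ref{lem:3.5} to confine a ratio $x_i/x_h$ to a finite set, and then count. There are two differences.

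\textbf{Case (b) is not needed.} The $I_l$ and the $J_l$ each partition a $k$-element set into the same number $m$ of non-empty blocks. So if all $|I_l|=1$, then $m=k$ and all $|J_l|=1$, which is your Step 4. Hence some block always has $|I_l|\geq 2$; this is what the paper uses when it says one ``may choose $I$ with $|I|\geq 2$''. Your transfer argument for (b) is still valid, just redundant. It bounds the admissible $\beta$'s by the coset count, then applies Lemma \ref{lem:3.5} to $\sum_i(a_i/\sigma)x_i=1$ to get $O(1)$ tuples $\alpha$ for each value of the sum.

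\textbf{The final count.} You count directly, using Lemma \ref{lem:3.4}, the coset trick from the proof of Lemma \ref{lem:3.9}, and Lemma \ref{lem:3.8}. The paper instead merges $a_hx_h+a_ix_i=(a_h+\lambda a_i)x_h$ into a single coordinate and invokes Lemma \ref{lem6.0}(i), i.e.\ Theorem \ref{thm:2.3} for $(k-1)$-tuples. The paper's route is shorter given that theorem. Yours is more elementary and self-contained. It also handles $k=2$ cleanly, whereas the paper's reduction then lands on $1$-tuples, formally outside the hypothesis $k\geq 2$ of Theorem \ref{thm:2.3}, though that case is trivial.
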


\begin{proof}
Let $(\alpha_1\kdots\alpha_k)\in \VV_{\AA}^{**}(X)$, 
and choose $(\beta_1\kdots \beta_k)$ from the set $\VV_{\AA}(X)\setminus (\alpha_1\kdots \alpha_k)_P$ with $\sum_{i=1}^k\beta_i-\sum_{i=1}^k\alpha_i=0$. By removing vanishing subsums, we obtain that there are non-empty subsets $I$, $J$ of $\{ 1\kdots k\}$ such that 
\begin{equation}\label{eq6.0}
\sum_{i\in I}\alpha_i -\sum_{j\in J} \beta_j=0,
\end{equation}
and no proper subsum of this expression vanishes. Here, both $I$, $J$ are non-empty because of our definition of $\VV_{\AA}(X)$, and we may choose $I$ with $|I|\geq 2$, since otherwise $(\beta_1\kdots\beta_k)\in (\alpha_1\kdots\alpha_k)_P$, which we excluded.
Write $\alpha_i=a_ix_i$ for $i=1\kdots k$, where $(a_1\kdots a_k)\in\AA$ and $x_i\in \Gamma_1$
for $i=1\kdots k$.
Pick $h\in I$. Dividing \eqref{eq6.0} by $\alpha_{h}$, we get an equation
\[
1+\sum_{i\in I\setminus\{h\}}\frac{a_i}{a_{h}}\cdot \frac{x_i}{x_{h}}-\sum_{j\in J}\cdots =0,
\]
of which no proper subsum vanishes.
By e.g., Lemma \ref{lem:3.5},  there is a finite subset $\BB$ of  $\Gamma_1$, depending only on $k,\Gamma_1,\AA$, such that $x_i/x_{h}\in\BB$ for $i\in I$. So altogether, for each $(a_1x_1\kdots a_kx_k)\in\VV_{\AA}^{**}(X)$, there are $\lambda\in\BB$ and distinct indices $h,i$ in $\{ 1\kdots k\}$, 
such that $x_i/x_h=\lambda$.  

Now let
$\AA''$ be the union of the sets $\AA_{h,i,\lambda}$ ($h\not= i\in\{ 1\kdots k\}, \lambda\in\BB$), where $\AA_{h,i,\lambda}$ is the set of $(k-1)$-tuples obtained by taking the tuples $(a_1\kdots a_k)$ from $\AA$,
and for each of these tuples,  replacing the entries $a_h,a_i$ by one single entry $a_h-\lambda a_i$. 
Then by the first part of Lemma \ref{lem6.0}, we obtain
\[
|\VV_{\AA}^{**}(X)|\leq |\VV_{\AA''}(X)|\ll (\log X)^{(k-1)r_1}.
\] 
\end{proof}

\begin{proof}[Proof of Theorem \ref{thm:2.4}]
Let $\TT^*_{\AA}(X)$ be the set of $\alpha$ satisfying \eqref{eq1.2} but with the additional property
that no subsum of $\sum_{i=1}^k a_ix_i$ vanishes. This is the set of $\alpha$ such that there is 
$(\alpha_1\kdots\alpha_k)\in\VV_{\AA}(X)$ with $\alpha =\alpha_1+\cdots +\alpha_k$. Lemmas
\ref{lem6.1} and \ref{lem6.2} imply that 
with $\ll (\log X)^{(k-1)r_1}$ exceptions, 
every $\alpha\in\TT^*_{\AA} (X)$
has precisely $k!$ different representations in the form $\alpha_1+\cdots +\alpha_k$ 
with $(\alpha_1\kdots\alpha_k)\in\VV_{\AA} (X)$, all lying in a set $(\alpha_1\kdots \alpha_k)_P$.
Thus, using the second part of Lemma \ref{lem6.0},
\begin{align}\label{eq6.x}
|\TT_{\AA}^* (X)| &=(k!)^{-1}|\VV_{\AA} (X)|+O((\log X)^{(k-1)r_1})
\\
\notag
&=\frac{|\AA|}{k!}c(\Gamma_1^k)(\log X)^{kr_1}+O((\log X)^{kr_1-1})
\ \ \text{as } X\to\infty .
\end{align}

It remains to dispose of the non-vanishing subsum condition.
Let $\TT_{\AA}^{**}(X)$ be the set of $\alpha$ satisfying \eqref{eq1.2} but now with the additional condition that every representation $\alpha =\sum_{i=1}^k a_ix_i$ with $(a_1\kdots a_k)\in\AA$,
$x_i\in \Gamma_1$ for $i=1\kdots k$ has a vanishing subsum. Pick $\alpha\in \TT_{\AA}^{**}$ with $\alpha\not= 0$.
Then $\alpha$ is in fact representable as $\sum_{i\in I} a_ix_i$, where $2\leq |I|<k$ and where no subsum of $\sum_{i\in I} a_ix_i$ vanishes.
Letting $\AA_I$ consist of all tuples $(a_i:\, i\in I)$ with $(a_1\kdots a_k)\in\AA$, and including the case $\alpha =0$, we deduce from part 1 of Lemma \ref{lem6.0} that
\[
|\TT_{\AA}^{**}(X)|\leq 1+\sum_{I\subsetneq\{ 1\kdots r\}} |\VV_{\AA_I}(X)|\ \ 
\ll (\log X)^{(k-1)r_1}\ \ \text{as } X\to\infty .
\]
By combining this with \eqref{eq6.x}, the proof of Theorem \ref{thm:2.4} is completed.
\end{proof}

\end{document}